\documentclass[11 pt]{article}

\usepackage{fullpage}
\usepackage{color}
\usepackage{dsfont}
\usepackage{ifpdf}
\usepackage{hyperref}
\usepackage{dcolumn}
\usepackage{url}
\usepackage{amsmath}
\usepackage{amscd}
\usepackage{amsfonts}
\usepackage{amssymb}
\usepackage{bm}   
\usepackage{bbm}   
\usepackage{verbatim}
\usepackage{stmaryrd}
\usepackage{amsthm}
\usepackage{graphicx}
\usepackage{cite}
\usepackage{authblk}

























\theoremstyle{plain}   \newtheorem{Alg}{Algorithm}
\theoremstyle{plain}   \newtheorem{Lem}{Lemma}
\theoremstyle{plain} 	
\theoremstyle{plain} 	\newtheorem{The}{Theorem}
\theoremstyle{plain} 	\newtheorem{Prop}{Proposition}
\theoremstyle{plain} 	
\theoremstyle{plain}	\newtheorem*{Rem}{Remark}
\theoremstyle{plain}	\newtheorem*{Rems}{Remarks}
\theoremstyle{plain}	\newtheorem{Def}{Definition} 
\theoremstyle{plain}	
\theoremstyle{plain}   
\theoremstyle{plain} \newtheorem{Exa}{Example}
\theoremstyle{plain} 




\def\norm#1{\|#1\|}

\def\implies{\Longrightarrow}

\def\indicator{\mathds{1}}



\def\E{\mathbb{E}}

\def\N{\mathbb{N}}

\def\P{\mathbb{P}}

\def\Z{\mathbb{Z}}


\def\FM{\mathcal{F}}

\def\LM{\mathcal{L}}

\def\OM{\mathcal{O}}

\def\QM{\mathcal{Q}}
\def\RM{\mathcal{R}}
\def\SM{\mathcal{S}}
\def\TM{\mathcal{T}}

\def\WM{\mathcal{W}}
\def\XM{\mathcal{X}}
\def\YM{\mathcal{Y}}


\def\Kt{\widetilde{K}}

\def\Nt{\widetilde{N}}

\def\Ut{\widetilde{U}}

\def\Xt{\widetilde{X}}

\def\Zt{\widetilde{Z}}


\def\Rh{\widehat{R}}

\def\ih{\widehat{i}}
\def\jh{\widehat{j}}
\def\kh{\widehat{k}}

\def\rh{\widehat{r}}




 
\def\Prt{\mathbb{\widetilde{P}}}
\def\Prh{\mathbb{\widehat{P}}}

\def\FMt{\widetilde{\FM}}
\def\Omegat{\widetilde{\Omega}}

\bibliographystyle{unsrt}


\begin{document}

\title{Exponential Bounds for Convergence of Entropy Rate Approximations in 
Hidden Markov Models Satisfying a Path-Mergeability Condition}

\author{Nicholas F. Travers \thanks{Department of Mathematics, Technion--Israel Institute of Technology.  
E-mail - \texttt{travers@tx.technion.ac.il}. (This work was completed primarily while the author was at the 
University of California, Davis.)  }}

\date{} 

\maketitle

\vspace{- 10 mm}
\begin{abstract}
A hidden Markov model (HMM) is said to have \emph{path-mergeable states} if for any two states $i, j$ 
there exists a word $w$ and state $k$ such that it is possible to transition from both $i$ and $j$
to $k$ while emitting $w$.  We show that for a finite HMM with path-mergeable states the block 
estimates of the entropy rate converge exponentially fast. We also show that the path-mergeability
property is asymptotically typical in the space of HMM topologies and easily testable.
\end{abstract}
\vspace{-5 mm}

\section{Introduction}
\label{sec:Introduction}

Hidden Markov models (HMMs) are generalizations of Markov chains in which the underlying Markov state 
sequence $(S_t)$ is observed through a noisy or lossy channel, leading to a (typically) non-Markovian output 
process $(X_t)$. They were first introduced in the 50s as abstract mathematical models \cite{Gilb59a, Blac57a, Blac57b},
but have since proved useful in a number of concrete applications, such as speech recognition 
\cite{Juan91a, Rabi89a, Bahl83a, Jeli76a} and bioinformatics \cite{Siep04a, Eddy98a, Karp98a, Eddy95a, Bald94a}.

One of the earliest major questions in the study of HMMs \cite{Blac57b} was to determine the entropy rate of the output process
\begin{align*}
h &= \lim_{t \to \infty} H(X_t|X_1,...,X_{t-1}).
\end{align*}
Unlike for Markov chains, this actually turns out to be quite difficult for HMMs. Even in the finite case no general closed 
form expression is known, and it is widely believed that no such formula exists. A nice integral expression was provided 
in \cite{Blac57b}, but it is with respect to an invariant density that is not directly computable. 

In practice, the entropy rate $h$ is instead often estimated directly by the finite length block estimates
\begin{align*}
h(t) & = H(X_t|X_1,...,X_{t-1}).
\end{align*}
Thus, it is important to know about the rate of convergence of these estimates to ensure the quality of the approximation. 

Moreover, even in cases where the entropy rate can be calculated exactly, such as for unifilar HMMs, the rate of convergence
of the block estimates is still of independent interest. It is important for numerical estimation of various complexity measures,
such as the excess entropy and transient information \cite{Crut01a}, and is critical for an observer wishing to make predictions 
of future output from a finite observation sequence $X_1, ..., X_t$. It is also closely related to the rate of memory loss in the initial condition for 
HMMs, a problem that has been studied extensively in the field of filtering theory \cite{Sowe92a, Atar95a, Atar97a, Glan00b, Chig04a, 
Douc09a, Coll09a}. Though, primarily in the case of continuous real-valued outputs with gaussian noise or similar, rather than the 
discrete case we study here. 

No general bounds are known for the rate of convergence of the estimates $h(t)$, but exponential bounds have been established
for finite HMMs (with both a finite internal state set $\SM$ and finite output alphabet $\XM$) under various positivity assumptions. 
The earliest known, and mostly commonly cited, bound is given in \cite{Birc62a}, for finite functional HMMs with strictly positive transition 
probabilities in the underlying Markov chain. A somewhat improved bound under the same hypothesis is also given in \cite{Hoch99a}. 
Similarly, exponential convergence for finite HMMs with strictly positive symbol emission probabilities and an aperiodic underlying Markov 
chain is established in \cite{Pfis03a} (both for state-emitting and edge-emitting HMMs) using results from \cite{Glan00b}. 

Without any positivity assumptions, though, things become substantially more difficult. In the particular case of unifilar HMMs, we have
demonstrated exponential convergence in our recent work on synchronization with Crutchfield \cite{Trav11a, Trav11b}. Also, in reference
\cite{Han06} exponential convergence is established under some fairly technical hypotheses in studying entropy rate analyticity. But, no
general bounds on the convergence rate of the block estimates $h(t)$ have been demonstrated for all finite HMMs. 

Here we prove exponential convergence for finite HMMs (both state-emitting and edge-emitting) satisfying the following simple 
path-mergeability condition: \emph{For each pair of distinct states $i, j$ there exists a word $w$ and state $k$ such that it is possible 
to transition from both $i$ and $j$ to $k$ while emitting $w$.} We also show that this condition is easily testable (in computational time polynomial 
in the number of states and symbols) and asymptotically typical in the space of HMM topologies, in that a randomly selected topology will 
satisfy this condition with probability approaching 1, as the number of states goes to infinity. By contrast, the positivity conditions assumed 
in \cite{Birc62a, Hoch99a, Pfis03a}, as well as the unifilarity hypothesis we assume in \cite{Trav11a, Trav11b}, are satisfied only for a vanishingly
small fraction of HMM topologies in the limit that the number of states becomes large, and the conditions assumed in \cite{Han06} are, to our 
knowledge, not easily testable in an automated fashion like the path-mergeability condition. The conditions assumed in \cite{Han06}
are also intuitively somewhat stronger than path-mergeability in that they require the output process $(X_t)$ to have full support and require 
uniform exponential convergence of conditional probabilities\footnotemark{}, neither of which are required for path-mergeable HMMs. 

\footnotetext{E.g., there exist constants $K > 0$ and $0 < \alpha <1$ such that for any symbol $x$ and symbol sequence
$x_{-t -n}, ..., x_{-1}$, $t,n \in \N$, $|\P(X_0 = x| X_{-1} = x_{-1}, ... , X_{-t} = x_{-t}) - \P(X_0 = x| X_{-1} = x_{-1}, ..., X_{-t-n} = x_{-t-n})|
\leq K \alpha^t$.}

The structure of the remainder of the paper is as follows. In Section \ref{sec:DefinitionsAndNotation} we introduce the formal framework 
for our results, including more complete definitions for hidden Markov models and their various properties, as well as the entropy rate 
and its finite-block estimates. In Section \ref{sec:ResultsForEdgeEmittingHMMs} we provide proofs of our exponential convergence results 
for edge-emitting HMMs satisfying the path-mergeability property. In Section \ref{sec:RelationToStateEmittingHMMs} we use the edge-emitting
results to establish analogous results for state-emitting HMMs. Finally, in Section \ref{sec:TypicalityTestabilityOfPathMergeability} we 
provide the algorithmic test for path-mergeability and demonstrate that this property is asymptotically typical. The proof methods used
in Section \ref{sec:ResultsForEdgeEmittingHMMs} are based on the original coupling argument used in \cite{Birc62a}, but are substantially
more involved because of our weaker assumption.

\section{Definitions and Notation}
\label{sec:DefinitionsAndNotation}

By an \emph{alphabet} $\XM$ we mean simply a set of symbols, and by a \emph{word} $w$ over the alphabet $\XM$ we mean 
a finite string $w = x_1,...,x_n$ consisting of symbols $x_i \in \XM$. The \emph{length} of a word $w$ is the number of symbols it contains 
and is denoted by $|w|$. $\XM^*$ denotes the set of all (finite) words over an alphabet $\XM$, including the empty word $\lambda$. 

For a sequence $(a_n)$ (of symbols, random variables, real numbers, ... etc.) and integers $n \leq m$, $a_n^m$ denotes the 
finite subsequence $a_n, a_{n+1}, ... , a_m$. This notation is also extended in the natural way to the case $m = \infty$ or 
$n = - \infty$. In the case $n > m$, $a_n^m$ is interpreted as the null sequence or empty word. 

\subsection{The Entropy Rate and Finite-Block Estimates}
\label{sec:TheEntropyRateAndFiniteBlockEstimates}

Throughout this section, and the remainder of the paper, we adopt the following standard information theoretic 
conventions for logarithms (of any base):
\begin{align*}
0 \cdot \log (0) & \equiv \lim_{\xi \to 0^+} \xi \cdot \log (\xi)  = 0. \\
0 \cdot \log (1/0) & \equiv \lim_{\xi \to 0^+} \xi \cdot \log (1/\xi) = 0.
\end{align*}
Note that with these conventions the functions $\xi \log(\xi)$ and $\xi \log(1/\xi)$ are both continuous on $[0,1]$. 

\begin{Def}
The \emph{entropy} $H(X)$ of a discrete random variable $X$ is
\begin{align*}
H(X) \equiv - \sum_{x \in \XM} \P(x) \log_2 \P(x)
\end{align*}
where $\XM$ is the alphabet (i.e. set of possible values) of the random variable $X$ and  $\P(x) = \P(X=x)$. 
\end{Def}

\begin{Def}
For discrete random variables $X$ and $Y$ the \emph{conditional entropy} $H(X|Y)$ is
\begin{align*}
H(X|Y) & \equiv \sum_{y \in \YM} \P(y) \cdot H(X|Y=y) \\
& = - \sum_{y \in \YM} \P(y) \sum_{x \in \XM} \P(x|y) \log_2 \P(x|y)
\end{align*}
where $\XM$ and $\YM$ are, respectively, the alphabets of $X$ and $Y$, $\P(y) = \P(Y=y)$, and $\P(x|y) = \P(X=x|Y=y)$.
\end{Def}

Intuitively, the entropy $H(X)$ is the amount of uncertainty in predicting $X$, or equivalently, the amount of 
information obtained by observing $X$. The conditional entropy $H(X|Y)$ is the average uncertainty in
predicting $X$ given the observation of $Y$. These quantities satisfy the relations
\begin{align*}
0 \leq H(X|Y) \leq H(X) \leq \log_2|\XM|.
\end{align*}

\begin{Def}
Let $(X_t)$ be a discrete time stationary process over a finite alphabet $\XM$. The \emph{entropy rate} $h$ of
the process $(X_t)$ is the asymptotic per symbol entropy:
\begin{align}
\label{eq:hdef}
h \equiv \lim_{t \to \infty} H(X_1^t)/t 
\end{align}
where $X_1^t = X_1,...,X_t$ is interpreted as a single discrete random variable taking values in the cross product alphabet $\XM^t$. 
\end{Def}
Using stationarity it may be shown that this limit $h$ always exists and is approached monotonically from above. Further, it may 
be shown that the entropy rate may also be expressed as the monotonic limit of the conditional next symbol entropies $h(t)$.
That is, $h(t) \searrow h$, where 
\begin{align}
\label{eq:htdef}
h(t) \equiv H(X_t|X_1^{t-1}).
\end{align}

The non-conditioned estimates $H(X_1^t)/t$ can approach no faster than a rate of $1/t$. However, the conditional 
estimates $h(t) = H(X_t|X_1^{t-1})$ can approach much more quickly, and are therefore generally more useful. 
Our primary goal is to establish an exponential bound on the rate of convergence of these conditional finite-block estimates 
$h(t)$ for the output process of a HMM satisfying the path-mergeability property defined in Section \ref{sec:PathMergeability}. 

\subsection{Hidden Markov Models}
\label{sec:HiddenMarkovModels}

We will consider here only finite HMMs, meaning that both the internal state set $\SM$ and output alphabet $\XM$
are finite. There are two primary types: \emph{state-emitting} and \emph{edge-emitting}. The state-emitting variety 
is the simpler of the two, and also the more commonly studied, so we introduce them first. However, our primary 
focus will be on edge-emitting HMMs because the path-mergeability condition we study, as well as the block model 
presentation of Section \ref{sec:BlockModels} used in the proofs, are both more natural in this context. 
\begin{Def}
A \emph{state-emitting hidden Markov model} is a 4-tuple $(\SM, \XM, \TM, \OM)$ where:
\begin{itemize}
\item $\SM$ is a finite set of states. 
\item $\XM$ is a finite alphabet of output symbols.
\item $\TM$ is an $|\SM| \times |\SM|$ stochastic state transition matrix: $\TM_{ij} = \P(S_{t+1} = j | S_{t} = i)$.
\item $\OM$ is an $|\SM| \times |\XM|$ stochastic observation matrix: $\OM_{ix} = \P(X_t = x | S_t = i)$. 
\end{itemize}
\end{Def}

The state sequence $(S_t)$ for a state-emitting HMM is generated according to the Markov transition matrix $\TM$,
and the observed sequence $(X_t)$ has conditional distribution defined by the observation matrix $\OM$:
\begin{align*}
\P(X_n^m = x_n^m | S_{0}^{\infty} = s_{0}^{\infty}) 
= \P(X_n^m = x_n^m | S_n^m = s_n^m)
= \prod_{t = n}^{m} \OM_{s_t x_t}.
\end{align*}  

An important special case is when the observation matrix is deterministic, and the symbol $X_t$ is simply
a function of the state $S_t$. This type of HMMs, known as \emph{functional HMMs} or 
\emph{functions of Markov chains}, are perhaps the most simple variety conceptually, and also were the
first type to be heavily studied. The integral expression for the entropy rate provided in \cite{Blac57b} 
and exponential bound on convergence of the block estimates $h(t)$ established in \cite{Birc62a} 
both dealt with HMMs of this type.  

Edge-emitting HMMs are an alternative representation in which the symbol $X_t$ depends not simply 
on the current state $S_t$ but also the next state $S_{t+1}$, or rather the transition between them. 

\begin{Def}
An \emph{edge-emitting hidden Markov model} is a 3-tuple $(\SM, \XM, \{\TM^{(x)}\})$ where:
\begin{itemize}
\item $\SM$ is a finite set of states. 
\item $\XM$ is a finite alphabet of output symbols.
\item $\TM^{(x)}, x \in \XM$, are $|\SM| \times |\SM|$ sub-stochastic symbol-labeled transition matrices 
whose sum $\TM$ is stochastic. $\TM^{(x)}_{ij}$ is the probability of transitioning from $i$ to $j$ on symbol $x$.
\end{itemize}
\end{Def}

Visually, one can depict an edge-emitting HMM as a directed graph with labeled edges. The vertices are the states,
and for each $i,j,x$ with $\TM^{(x)}_{ij} > 0$ there is a directed edge from $i$ to $j$ labeled with the transition probability 
$p = \TM^{(x)}_{ij}$ and symbol $x$. The sum of the probabilities on all outgoing edges from each state is 1.

The operation of the HMM is as follows: From the current state $S_t$ the HMM picks an outgoing 
edge $E_t$ according to their probabilities, generates the symbol $X_t $ labeling this edge, and 
then follows the edge to the next state $S_{t+1}$. Thus, we have the conditional measure
\begin{align*}
\P( S_{t+1} = j, X_t = x |S_t = i, S_0^{t-1} = s_0^{t-1}, X_0^{t-1} = x_0^{t-1} )
= \P(S_{t+1} = j, X_t = x | S_t = i) 
= \TM^{(x)}_{ij}
\end{align*} 
for any $i \in \SM$, $t \geq 0$, and possible length-$t$ joint past $(s_0^{t-1},x_0^{t-1})$ which
may precede state $i$. From this it follows, of course, that the state sequence $(S_t)$ is a 
Markov chain with transition matrix $\TM = \sum_x \TM^{(x)}$. As for state-emitting HMMs, however,
we will be interested primarily in the observable output sequence $(X_t)$ rather than the internal
state sequence $(S_t)$, which is assumed to be hidden form the observer. 

\begin{Rem}
It is assumed that, for a HMM of any type, each symbol $x \in \XM$ may be actually be generated with positive 
probability. That is, for each $x \in \XM$, there exists $i \in \SM$ such that $\P(X_0 = x|S_0 = i) > 0$. Otherwise, the symbol 
$x$ is useless and the alphabet can be restricted to $\XM/\{x\}$. 
\end{Rem}

\subsubsection{Irreducibility and Stationary Measures}
\label{sec:IrreducibilityAndStationaryMeasures}

A HMM, either state-emitting or edge-emitting, is said to be \emph{irreducible} if the underlying Markov chain over states with 
transition matrix $\TM$ is irreducible. In this case, there exists a unique \emph{stationary distribution} $\pi$ over the states 
satisfying $\pi = \pi \TM$, and the joint state-symbol sequence $(S_t,X_t)_{t \geq 0}$ with initial state $S_0$ drawn according
to $\pi$ is itself a stationary process. We will henceforth assume all HMMs are irreducible and denote by $\P$ the (unique) 
stationary measure on joint state-symbol sequences satisfying $S_0 \sim \pi$. 

In the following, this measure $\P$ will be our primary focus. Unless otherwise specified, all random variables are assumed 
to be generated according to the stationary measure $\P$. In particular, the \emph{entropy rate} $h$ and \emph{block estimate $h(t)$} 
for a HMM $M$ are defined by (\ref{eq:hdef}) and (\ref{eq:htdef}) where $(X_t)$ is the stationary output process of $M$ with law $\P$.  

At times, however, it will be necessary to consider alternative measures in the proofs given by choosing the initial state $S_0$
according to a nonstationary distribution. We will denote by $\P_i$ the measure on joint sequences $(S_t,X_t)_{t \geq 0}$ given 
by fixing $S_0 = i$ and by $\P_{\mu}$ the measure given by choosing $S_0$ according to the distribution $\mu$:
\begin{align*}
\P_i(\cdot) = \P(\cdot | S_0 = i) ~\mbox{ and }~
\P_{\mu}( \cdot ) = \sum_i \mu_i \P_i(\cdot).
\end{align*} 
These measures $\P$, $\P_i$, and $\P_{\mu}$ are, of course, also extendable in a natural way to biinfinite sequences
$(S_t,X_t)_{t \in \Z}$ as oppose to one-sided sequences $(S_t,X_t)_{t \geq 0}$, and we will do so as necessary. 

\subsubsection{Equivalence of Model Types}
\label{sec:EquivalenceOfModelTypes}

Though they are indeed different objects state-emitting and edge-emitting HMMs are equivalent in the following sense:
Given an irreducible HMM $M$ of either type there exists an irreducible HMM $M'$ of the other type such that the stationary
output processes $(X_t)$ for the two HMMs $M$ and $M'$ are equal in distribution. We recall below the standard conversions. 

\begin{enumerate}
\item \emph{State-Emitting to Edge-Emitting} - If $M = (\SM, \XM, \TM, \OM)$ then $M' = (\SM, \XM, \{\TM^{'(x)}\})$, 
          where $\TM^{'(x)}_{ij} = \TM_{ij} \OM_{jx} $. 
\item \emph{Edge-Emitting to State-Emitting} - If $M = (\SM, \XM, \{ \TM^{(x)} \})$ then $M' = (\SM', \XM, \TM', \OM')$, 
          where $\SM' = \{(i,x) : \sum_j \TM^{(x)}_{ij} > 0 \}$, $\TM'_{(i,x) (j,y)} = 
          \left(\TM^{(x)}_{ij}/\sum_k  \TM^{(x)}_{ik}\right) \cdot \left(\sum_k \TM^{(y)}_{jk} \right)$, and $\OM'_{(i,x)y} = \indicator\{x=y\}$. 
\end{enumerate}

Note that the output $M'$ of the edge-emitting to state-emitting conversion is not just an arbitrary state-emitting HMM, but rather
is always a functional HMM. Thus, composition of the two conversion algorithms shows that functional HMMs are also 
equivalent to either state-emitting or edge-emitting HMMs. 

\subsubsection{Path-Mergeability}
\label{sec:PathMergeability}

For a HMM $M$, let $\delta_i(w)$ be the set of states $j$ that state $i$ can transition to upon emitting the word $w$:
\begin{align*}
\delta_i(w) & \equiv \{ j \in \SM: \P_i(X_0^{|w|-1} = w, S_{|w|} = j) > 0 \} ~,~ \mbox{for an edge-emitting HMM}. \\
\delta_i(w) & \equiv \{ j \in \SM: \P_i(X_1^{|w|} = w, S_{|w|} = j) > 0 \} ~,~ \mbox{for a state-emitting HMM}.
\end{align*}
In either case, if $w$ is the null word $\lambda$ then $\delta_i(w) \equiv \{i\}$, 
for each $i$. The following properties will be of central interest. 

\begin{Def}
A pair of states $i,j$ of a HMM $M$ is said to be \emph{path-mergeable} if there exists some word $w$ and state $k$ 
such that it is possible to transition from both $i$ and $j$ to $k$ on $w$. That is,  
\begin{align}
\label{eq:PMDef}
k \in \delta_i(w) \cap \delta_j(w). 
\end{align}
A HMM $M$ is said to have \emph{path-mergeable states}, or be \emph{path-mergeable}, if each pair of distinct 
states $i,j$ is path-mergeable.
\end{Def}

\begin{Def}
A symbol $x$ is said to be a \emph{flag} or \emph{flag symbol} for a state $k$ if it is possible to transition to state $k$ upon 
observing the symbol $x$, from any state $i$ for which it is possible to generate symbol $x$ as the next output. That is,
\begin{align}
\label{eq:FlagSymbolDef}
k \in \delta_i(x), \mbox{ for all $i$ with } \delta_i(x) \not= \emptyset. ~ \footnotemark{}
\end{align} 
A HMM $M$ is said to be \emph{flag-state} if each state $k$ has some flag symbol $x$.
\footnotetext{Note that if $x$ is a flag symbol for $k$, then after observing the symbol $x$ it is always possible for the HMM to 
be in state $k$ at the current time, regardless of its initial state or previous outputs. Thus, we call the symbol $x$ a flag
for the state $k$ as it signals or `flags' to the observer that $k$ is now possible as the current state of the HMM.}   
\end{Def}

Our end goal in Section \ref{sec:ResultsForEdgeEmittingHMMs}, below, is to prove exponential bounds on convergence 
of the entropy rate estimates $h(t)$ for edge-emitting HMMs with path-mergeable states. To do so, however, we will first  
prove similar bounds for edge-emitting HMMs under the flag-state hypothesis and then bootstrap. As we will show in Section 
\ref{sec:BlockModels}, if an edge-emitting HMM has path-mergeable states then the block model $M^n$, obtained by considering 
length-$n$ blocks of outputs as single output symbols, is flag-state, for some $n$. Thus, exponential convergence bounds for flag-state, 
edge-emitting HMMs pass to exponential bounds for path-mergeable, edge-emitting HMMs by considering block presentations. In Section 
\ref{sec:RelationToStateEmittingHMMs} we will also consider similar questions for state-emitting HMMs. In this case, analogous convergence
results follow easily from the results for edge-emitting HMMs by applying the standard state-emitting to edge-emitting conversion. 

\begin{Rem}
Note that if a state-emitting HMM has strictly positive transition probabilities in the underylying Markov chain, as considered in 
\cite{Birc62a, Hoch99a}, it is always path-mergeable. Similarly, if an edge-emitting or state-emitting HMM has strictly positive symbol 
emission probabilities and an aperiodic underlying Markov chain, as consider in \cite{Pfis03a}, then it is always path-mergeable. 
The converse of these statements, of course, do not hold. A concrete example will be given in the next subsection. 
\end{Rem}

\subsubsection{An Illustrative Example }
\label{sec:IllustrativeExample} 

\begin{figure}[h]
\begin{center} 
\includegraphics[scale=0.75]{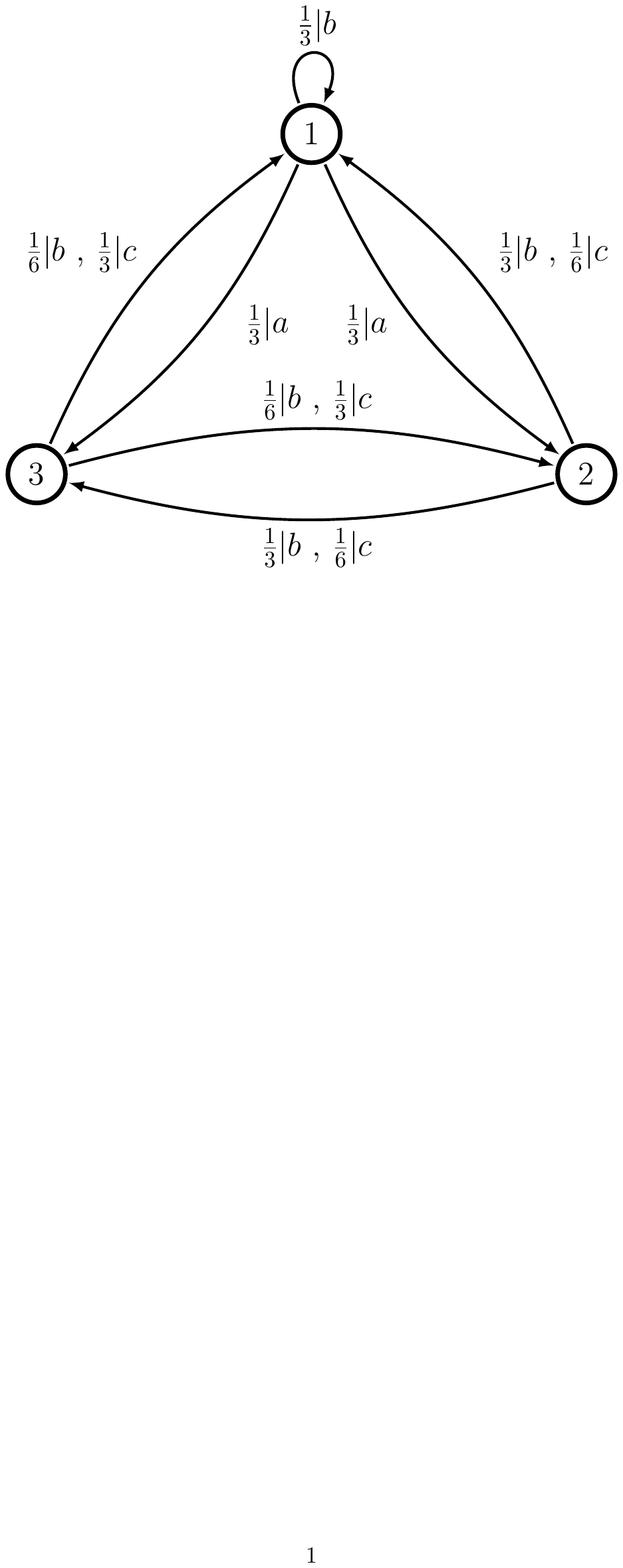}
\end{center}
\caption{Graphical depiction of the HMM $M$ described in Example \ref{ex:Example}. 
Edges are labeled $p|x$ for the transition probability $p = \TM_{ij}^{(x)}$ and symbol $x$. For visual clarity, parallel edges are omitted 
and each directed edge between states is labeled with all possible symbols upon which the transition between these two states may occur.} 
\label{fig:Example}
\end{figure}

To demonstrate the path-mergeability property and motivate its utility, we provide here a simple example of a 3-state, 3-symbol,
edge-emitting HMM $M$, which is path-mergeable, but such that neither $M$ or the equivalent functional HMM $M'$ given 
by the conversion algorithm of Section \ref{sec:EquivalenceOfModelTypes} satisfy any of the previously used conditions 
for establishing exponential convergence of the entropy rate estimates $h(t)$. Thus, although the entropy rate estimates 
for the output process of either model converge exponentially, this fact could not be deduced from previous results.

\begin{Exa}
\label{ex:Example}
Let $M$ be the edge-emitting HMM $(\SM, \XM, \{ \TM^{(x)} \})$ with $\SM = \{1,2,3\}$, $\XM = \{a,b,c\}$, and transition matrices
\begin{align*}
\TM^{(a)} = \left( \begin{array}{ccc} 0 & 1/3 & 1/3 \\ 0 & 0 & 0 \\ 0 & 0 & 0 \end{array} \right),~ 
\TM^{(b)} = \left( \begin{array}{ccc} 1/3 & 0 & 0 \\ 1/3 & 0 & 1/3 \\ 1/6 & 1/6 & 0 \end{array} \right),~
\TM^{(c)} = \left( \begin{array}{ccc} 0 & 0 & 0 \\ 1/6 & 0 & 1/6 \\ 1/3 & 1/3 & 0 \end{array} \right).
\end{align*} 
Also, let $M'$ be the equivalent (functional) state-emitting HMM constructed from $M$ by the conversion algorithm of 
Section \ref{sec:EquivalenceOfModelTypes}. For the readers convenience, a graphical depiction of the HMM 
$M$ is given in Figure \ref{fig:Example}. 

$M$ is, indeed, path-mergeable since each state $i$ can transition to state $1$ upon emitting the 1-symbol 
word $b$. However, $M$ clearly does not satisfy the unifilarity condition assumed in \cite{Trav11a, Trav11b} 
(or exactness condition used in \cite{Trav11a}), and neither $M$ or $M'$ satisfy any of the positivity conditions assumed
in \cite{Birc62a, Hoch99a, Pfis03a}. Moreover, $M'$ does not satisfy the conditions assumed in \cite{Han06} since its
output process does not have full support (the 2-symbol word $aa$ is forbidden). 
\end{Exa}

\subsubsection{Additional Notation}
\label{sec:AdditionalNotation}

The following additional notation and terminology for an edge emitting HMM $M = (\SM, \XM, \{ \TM^{(x)}\})$ 
will be used below for our proofs in Section \ref{sec:ResultsForEdgeEmittingHMMs}. 
\begin{itemize}

\item $\P(w)$ and $\P_i(w)$ denote, respectively, the probability 
of generating $w$ according to the measures $\P$ and $\P_i:$ 
\begin{align*}
\P(w) & \equiv \P(X_0^{|w|-1} = w) ~\mbox{ and }~ \P_i(w) \equiv \P_i(X_0^{|w|-1} = w)
\end{align*}
with the conventions $\P(\lambda) \equiv 1$ and $\P_i(\lambda) \equiv 1$, $i \in \SM$, for the null word $\lambda$.  

\item The \emph{process language} $\LM(M)$ for the HMM $M$ is the set of words $w$ of positive probability
in its stationary output process $(X_t)$, and $\LM_n(M)$ is the set of length-$n$ words in the process language. 
\begin{align*}
\LM(M) & \equiv \{w \in \XM^*: \P(w) > 0 \}. \\
\LM_n(M) & \equiv \{w \in \LM(M): |w| = n \}. 
\end{align*}

\item For $w \in \LM(M)$, $\SM(w)$ is the set of states that can generate $w$. 
\begin{align*}
\SM(w) \equiv \{i \in \SM: \P_i(w) > 0 \}.
\end{align*}

\item Finally, $\phi_i(w)$ is the distribution over the current state induced by the observing the output $w$ from initial state $i$, 
and $\phi(w)$ is the distribution over the current state induced by observing $w$ with the initial state chosen according to $\pi$. 
\begin{align*}
\phi_i(w) & \equiv \P_i(S_{|w|}|X_0^{|w|-1} = w). \\
\phi(w) & \equiv \P(S_{|w|}|X_0^{|w|-1} = w). 
\end{align*}
That is, $\phi_i(w)$ is the probability vector whose $k_{th}$ component is $\phi_i(w)_k = \P_i(S_{|w|} = k|X_0^{|w|-1} = w)$,  and
$\phi(w)$ is the probability vector whose $k_{th}$ component is $\phi(w)_k = \P(S_{|w|} = k|X_0^{|w|-1} = w)$. In the case $\P_i(w) = 0$ 
(respectively $\P(w) = 0$), $\phi_i(w)$ (respectively $\phi(w)$) is, by convention, defined to be the null distribution consisting of all zeros.
\end{itemize}
All above notation may also be used with time indexed symbol sequences $x_n^m$ in place of the word $w$ as well.
In this case, the lower time index $n$ is always ignored, and $x_n^m$ is treated simply as a length-$(m - n + 1)$ word. 
So, for example, $\P(x_n^m) = \P(X_0^{m-n} = x_n^m)$ and $\P_i(x_n^m) = \P_i(X_0^{m-n} = x_n^m)$.

\section{Results for Edge-Emitting HMMs}
\label{sec:ResultsForEdgeEmittingHMMs}

In this section we establish exponential convergence of the entropy rate block estimates $h(t)$ for edge-emitting HMMs
with path-mergeable states. The basic structure of the arguments is as follows:
\begin{enumerate}
\item We establish exponential bounds for flag-state (edge-emitting) HMMs.
\item We extend to path-mergeable (edge-emitting) HMMs by passing to a block model representation (see Section \ref{sec:BlockModels}). 
\end{enumerate}
Exponential convergence in the flag-state case is established by the following steps:
\begin{itemize}
\item[(i)] Using large deviation estimates on the reverse time generation space defined below in Section \ref{subsubsec:UpperBoundProbGtc},
we show that the set of ``good'' length-$t$ symbol sequences $G_t$ defined by (\ref{eq:DefGt}) has combined probability $1 - O(\mbox{exponentially small})$.
\item[(ii)] Using a coupling argument similar to that given in \cite{Birc62a} we show that $\norm{ \phi_k(x_0^{t-1}) - \phi_{\kh}(x_0^{t-1}) }_{TV}$
is exponentially small, for any symbol sequence $x_0^{t-1} \in G_t$ and states $k, {\kh} \in \SM(x_0^{t-1})$. 
\item[(iii)] Using (ii) we show that the difference $H(X_t|X_0^{t-1} = x_0^{t-1}) - H(X_t|X_0^{t-1} = x_0^{t-1}, S_0)$ is 
exponentially small for any $x_0^{t-1} \in G_t$. Combining this with the fact that $\P(G_t^c)$ is exponentially small shows that
the difference $H(X_t|X_0^{t-1}) - H(X_t|X_0^{t-1}, S_0)$ is exponentially small, from which exponential convergence
of the estimates $h(t)$ follows easily by a sandwiching argument. 
\end{itemize}

\subsection{Under Flag-State Assumption}
\label{sec:UnderFlagStateAssumption} 

Throughout Section \ref{sec:UnderFlagStateAssumption} we assume $M = (\SM, \XM, \{ \TM^{(x)} \})$ is a flag-state, 
edge-emitting HMM, and denote the flag symbol for state $j$ by $y_j$: $j \in \delta_i(y_j)$, for all $i$ with 
$\delta_i(y_j) \not= \emptyset$. Also, for notational convenience, we assume the state set is $\SM = \{1, ... , |\SM|\}$ and 
the output alphabet is $\XM = \{1, ... , |\XM| \}$. The constants $p_*, q_*, r_*, \eta, \alpha_1, \alpha_2$ for the HMM $M$ are defined as follows:
\begin{align}
\label{eq:DefsOfConstants}
p_j & \equiv \P(X_0 = y_j|S_1 = j) ~~ \mbox{ and } ~~ p_* \equiv \min_j p_j. \nonumber \\
q_j & \equiv \min_{i \in \SM(y_j)} \P_i(S_1 = j|X_0 = y_j) ~~ \mbox{ and } ~~ q_* \equiv \min_j q_j. \nonumber \\
r_* & \equiv \min_{i,j} \pi_i/\pi_j. \nonumber \\
\eta & \equiv \frac{p_* r_{*}}{2 |\SM|} ~,~
\alpha_1 \equiv \exp{\left(- \frac{p_*^2 r_{*}^2}{2 |\SM|^2}\right)} ~,~
\alpha_2 \equiv (1 - q_*^2)^{\eta}.
\end{align}
Note that, under the flag-state assumption, we always have $p_*, q_*, r_* \in (0,1]$, $\eta, \alpha_1 \in (0,1)$, and $\alpha_2 \in [0,1)$.
Our primary objective is to prove the following theorem:

\begin{The} 
\label{thm:ExponentialConvergenceEntropyRateEstimatesFlagState}
The entropy rate block estimates $h(t)$ for the HMM $M$ converge exponentially with
\begin{align*}
\limsup_{t \to \infty} \left\{ h(t) - h \right\}^{1/t} \leq \alpha
\end{align*}
where $\alpha \equiv \max\{\alpha_1, \alpha_2\}$.
\end{The}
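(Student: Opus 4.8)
The plan is to reduce the theorem to an exponential bound on the information that the initial state $S_0$ carries about the current symbol $X_t$ given the observed past, and then to obtain that bound from a coupling of belief states that merges whenever a flag symbol is read. The starting point is a sandwich. Using the representation $h(t) = H(X_t \mid X_0^{t-1})$ (valid up to a harmless shift of the time index), $h(t)$ decreases monotonically to $h$; meanwhile a standard argument — combining $H(X_0^{t-1}) \le H(S_0) + H(X_0^{t-1}\mid S_0)$ with the fact that the HMM's Markov structure makes $H(X_t\mid X_0^{t-1},S_0)$ \emph{non}decreasing in $t$ (since $X_{t}\perp (S_0,X_0)$ given $(X_1^{t-1},S_1)$, by stationarity) — shows that $H(X_t\mid X_0^{t-1},S_0)$ increases monotonically to $h$. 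Hence
\begin{align*}
0 \le h(t) - h \le H(X_t\mid X_0^{t-1}) - H(X_t\mid X_0^{t-1},S_0),
\end{align*}
and it suffices to bound the right-hand side (the conditional mutual information between $X_t$ and $S_0$) by a quantity whose $t$-th root has $\limsup$ at most $\alpha$.

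Next I would localize to a fixed observed word. Writing the right-hand side as an average over $x_0^{t-1}$, for each word the inner difference is the Jensen gap of the next-symbol laws $\nu_i \equiv \P_i(X_t\mid x_0^{t-1})$, weighted by the posterior $\P(S_0=i\mid x_0^{t-1})$. Each $\nu_i$ is the one-step pushforward of the belief state $\phi_i(x_0^{t-1})$, and pushforward is a total-variation contraction, so this gap is controlled by $\max_{k,\kh \in \SM(x_0^{t-1})} \norm{\phi_k(x_0^{t-1}) - \phi_{\kh}(x_0^{t-1})}_{TV}$, up to a factor linear in $t$ from the standard continuity estimate for entropy. I then split the average into the good set $G_t$ and its complement: on $G_t^c$ I use the trivial bound $\log_2|\XM|$ together with $\P(G_t^c)$ exponentially small (step (i)), and on $G_t$ I use the coupling bound (step (ii)). Since the $t$-th root annihilates polynomial-in-$t$ prefactors, the final rate is the larger of the two exponential rates.

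The heart of the argument, and what I expect to be the main obstacle, is the coupling bound of step (ii). For $x_0^{t-1} \in G_t$ and $k,\kh \in \SM(x_0^{t-1})$, I would run two copies of the hidden path, one started at $k$ and one at $\kh$, both conditioned to emit the identical word $x_0^{t-1}$, and couple them so that they coincide from some time onward. The flag-state hypothesis is exactly what makes this work: whenever the read symbol is a flag $y_j$, the flagged state $j$ is a legal current state for \emph{every} consistent path, so by the definition of $q_*$ both copies can be sent to $j$ simultaneously with probability at least $q_*^2$, after which they read identical symbols from a common state and remain merged. Thus the non-merge probability — which dominates $\norm{\phi_k(x_0^{t-1}) - \phi_{\kh}(x_0^{t-1})}_{TV}$ — is at most $(1-q_*^2)^{N}$, where $N$ is the number of flag symbols in $x_0^{t-1}$; on $G_t$ one has $N \ge \eta t$ and hence this is at most $(1-q_*^2)^{\eta t} = \alpha_2^{\,t}$. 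The difficulty, relative to the positive-transition setting of \cite{Birc62a}, is that merging is forced \emph{only at the sparse flag symbols} rather than at every step, and that one is coupling filters constrained to produce a fixed output word, so the merge opportunities are correlated with the conditioning; care is needed to realize the coupling on the reverse-time generation space so that these opportunities form a clean, analyzable sequence.

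Finally, the large-deviation estimate of step (i) closes the loop by guaranteeing enough flags. On the reverse-time generation space, each flagged state occurs with stationary frequency bounded below in terms of $p_*$ and $r_*$, so a random length-$t$ word contains at least $\eta t$ flag symbols except on an event of probability at most $\alpha_1^{\,t}$; this event defines $G_t^c$. Combining the coupling bound $\alpha_2^{\,t}$ valid on $G_t$ with the reduction and localization above gives
\begin{align*}
h(t) - h \le \mathrm{poly}(t)\cdot\alpha_2^{\,t} + \log_2|\XM|\cdot \alpha_1^{\,t},
\end{align*}
whence $\limsup_{t\to\infty}\{h(t)-h\}^{1/t} \le \max\{\alpha_1,\alpha_2\} = \alpha$, as claimed.
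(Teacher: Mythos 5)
Your architecture is exactly the paper's: the same sandwich $0 \le h(t)-h \le H(X_t\mid X_0^{t-1}) - H(X_t\mid X_0^{t-1},S_0)$, the same split over a good set $G_t$ and its complement, the same pair-chain coupling that merges at flag occurrences with probability at least $q_*^2$, and the same reverse-time generation space for the large-deviation count. The reduction steps (total-variation contraction under the one-step pushforward, continuity of entropy in total variation, absorbing the linear-in-$t$ prefactor via the $t$-th root) all correspond to the paper's Lemmas \ref{lem:nextsymbol_currentstate_TVbound} and \ref{lem:1norm_vs_entropy_diff}.

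There is, however, one concrete gap at precisely the point you yourself identify as the main obstacle, and as literally written your step (ii) would fail. You claim the non-merge probability is at most $(1-q_*^2)^{N}$ with $N$ the number of flag symbols in $x_0^{t-1}$. But the coupled chains move according to the \emph{future-conditioned} kernel $\P(S_{\tau+1}=j \mid S_\tau = i, X_\tau^{t-1} = x_\tau^{t-1})$, and at a position where $x_\tau = y_j$ but $j$ is a poor generator of the remaining suffix $x_{\tau+1}^{t-1}$, this conditional probability can be far smaller than $q_*$: by Bayes it carries the weight $\P_j(x_{\tau+1}^{t-1})$ relative to the suffix probabilities $\P_{j'}(x_{\tau+1}^{t-1})$ of the competing successor states. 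So not every flag occurrence is a merge opportunity of probability $q_*^2$. The paper's resolution is to let $N(x_0^{t-1})$ count only positions $\tau$ where $x_\tau$ is the flag of a state $\ell_\tau$ that \emph{maximizes} $\P_k(x_{\tau+1}^{t-1})$ over $k$; for such $\tau$, replacing every suffix probability in the Bayes denominator by the maximal one shows $\P(S_{\tau+1}=\ell_\tau\mid S_\tau=i, X_\tau^{t-1}=x_\tau^{t-1}) \ge \P(S_{\tau+1}=\ell_\tau\mid S_\tau=i,X_\tau=y_{\ell_\tau}) \ge q_*$. Correspondingly, the large-deviation step must produce a linear density of flags-of-maximizers, not merely of flags; this is what the ordering convention in the reverse-time generation space achieves: the leftmost subinterval $[0, p_* r_*/|\SM|]$ of each partition $P_w$ is always assigned to the flag of the current maximizer $k(w)$, whose reverse-time emission probability is at least $p_* r_*/|\SM|$ by the Bayes estimate involving $r_*$. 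With $N$ so redefined, the rest of your sketch goes through and yields the claimed rate $\max\{\alpha_1,\alpha_2\}$.
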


The proof is, however, fairly lengthy. So, we have divided it into subsections following the steps (i)-(iii) outlined above. 
The motivation for the definitions of the various constants should become clear in the proof. 

\subsubsection{Upper Bound for $\P(G_t^c)$} 
\label{subsubsec:UpperBoundProbGtc}

For $x_0^{t-1} \in \LM_t(M)$, define
\begin{align*}
N(x_0^{t-1}) \equiv 
| \{0 \leq \tau \leq t-1 : x_{\tau} = y_k, \mbox{ for some } k \in \SM, \mbox{ and } \P_k(x_{\tau+1}^{t-1}) \geq \P_j(x_{\tau+1}^{t-1}), \forall j \in \SM \} | ~,
\end{align*}
and, for $t \in \N$, let 
\begin{align}
\label{eq:DefGt}
G_t \equiv \{ x_0^{t-1} \in \LM_t(M) : N(x_0^{t-1}) \geq \eta t \} ~\mbox{ and }~ G_t^c \equiv \LM_t(M)/G_t.
\end{align}
Recall that, according to our conventions, if $\tau = t -1$ then $x_{\tau+1}^{t-1} = x_t^{t-1}$ is the empty word $\lambda$,
and $\P_j(\lambda) = 1$, for each state $j \in \SM$. So, $N(x_0^{t-1})$ is indeed well defined. 

The purpose of this subsection is to prove the following lemma.

\begin{Lem}
\label{lem:BoundBelowProbGtc}
$\P(G_t^c) \leq \alpha_1^t$, for all $t \in \N$.
\end{Lem}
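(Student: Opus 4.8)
The plan is to express $N(X_0^{t-1})$ as a lower bound for a sum of $\{0,1\}$-indicators adapted to a \emph{reverse-time} filtration and then apply a martingale concentration inequality. I would reveal the output symbols from the right: for $0 \le \tau \le t$ put $\FM_\tau \equiv \sigma(X_\tau,\ldots,X_{t-1})$, so that $\FM_t \subseteq \FM_{t-1}\subseteq \cdots \subseteq \FM_0$ is increasing. For each $\tau$ let $k^{*}(\tau)$ be the state maximizing $\P_j(X_{\tau+1}^{t-1})$ over $j \in \SM$ (breaking ties by least index); this is $\FM_{\tau+1}$-measurable. Define $Z_\tau \equiv \indicator\{X_\tau = y_{k^{*}(\tau)}\}$, using that every state has a flag symbol. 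Since $y_{k^{*}(\tau)}$ is a flag for the maximizing state $k^{*}(\tau)$, whenever $Z_\tau = 1$ position $\tau$ meets the defining condition of $N$ (with $k = k^{*}(\tau)$). Hence $N(X_0^{t-1}) \ge \sum_{\tau=0}^{t-1} Z_\tau$ and $G_t^c = \{N(X_0^{t-1}) < \eta t\} \subseteq \{\sum_\tau Z_\tau < \eta t\}$, reducing the lemma to a lower-tail estimate for $\sum_\tau Z_\tau$.

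The heart of the argument is the uniform conditional lower bound $\Ex[Z_\tau \mid \FM_{\tau+1}] \ge 2\eta$. Fixing a realizable suffix $w = x_{\tau+1}^{t-1}$ and writing $k^{*} = k^{*}(w)$, stationarity gives $\Ex[Z_\tau \mid \FM_{\tau+1}] = \P(y_{k^{*}} w)/\P(w)$. For the numerator I would restrict to the transition $X_0 = y_{k^{*}},\, S_1 = k^{*}$ and use the Markov property to factor, obtaining $\P(y_{k^{*}}w) \ge \P(X_0 = y_{k^{*}}, S_1 = k^{*})\,\P_{k^{*}}(w) = \pi_{k^{*}} p_{k^{*}} \P_{k^{*}}(w) \ge \pi_{k^{*}} p_{*} \P_{k^{*}}(w)$; the flag-state hypothesis is exactly what makes the incoming $y_{k^{*}}$-transition into $k^{*}$ admissible with $p_{k^{*}} > 0$. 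For the denominator, since $k^{*}$ maximizes $\P_j(w)$, $\P(w) = \sum_j \pi_j \P_j(w) \le \P_{k^{*}}(w)$. The suffix factors cancel, and with $\pi_{k^{*}} \ge r_{*}/|\SM|$ (from $\pi_{k^{*}} \ge r_{*}\pi_j$ for all $j$ and $\sum_j \pi_j = 1$) this yields $\Ex[Z_\tau \mid \FM_{\tau+1}] \ge \pi_{k^{*}} p_{*} \ge p_{*} r_{*}/|\SM| = 2\eta$.

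To finish, write $Z_\tau = \Ex[Z_\tau \mid \FM_{\tau+1}] + Y_\tau$, where $(Y_\tau)$ is a martingale difference sequence for $(\FM_\tau)$; since $Z_\tau \in \{0,1\}$, each $Y_\tau$ has conditional range $1$. Then $\sum_\tau Z_\tau \ge 2\eta t + \sum_\tau Y_\tau$, so $\{\sum_\tau Z_\tau < \eta t\} \subseteq \{\sum_\tau Y_\tau \le -\eta t\}$, and the Azuma--Hoeffding inequality for bounded-range martingale differences gives $\P(\sum_\tau Y_\tau \le -\eta t) \le \exp(-2(\eta t)^2/t) = \exp(-2\eta^2 t) = \alpha_1^t$, using $2\eta^2 = p_{*}^2 r_{*}^2/(2|\SM|^2)$ --- precisely the form of $\alpha_1$. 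I expect the main obstacle to be the conditional bound in the second step: the choice of $k^{*}$ as the maximizing state is what lets the suffix probability $\P_{k^{*}}(w)$ cancel against $\P(w) \le \P_{k^{*}}(w)$, and the flag property must be invoked carefully to route the $y_{k^{*}}$-transition into $k^{*}$ while retaining a $p_{*}$-sized weight; the remaining concentration step is then routine.
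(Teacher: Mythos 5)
Your proof is correct, and it reaches the paper's bound with exactly the same constant. The core of your argument coincides with the paper's: both reveal the symbols from the right, and both rest on the same key estimate, namely that conditioned on any realizable suffix $w$, the probability that the preceding symbol equals the flag $y_{k^*}$ of the suffix-probability-maximizing state $k^*$ is at least $p_* r_*/|\SM| = 2\eta$ (the paper derives this via Bayes on the posterior $\P(S_{-|w|}=k^*\mid X_{-|w|}^{-1}=w)\ge r_*/|\SM|$; your direct cancellation $\P(y_{k^*}w)/\P(w)\ge \pi_{k^*}p_{k^*}\P_{k^*}(w)/\P_{k^*}(w)$ is an equivalent and arguably cleaner route to the same inequality). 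Where you genuinely differ is the concentration step: the paper builds an explicit ``reverse time generation space'' from i.i.d.\ uniforms, places the flag-symbol interval leftmost in each conditional partition so that $\Ut_{-\tau-1}\le 2\eta$ forces a flag hit, and then applies Hoeffding's inequality to the resulting i.i.d.\ Bernoulli$(2\eta)$ minorant; you instead center the indicators against their $\FM_{\tau+1}$-conditional means and apply Azuma--Hoeffding to the resulting martingale difference sequence of conditional range $1$. The two routes give identical exponents ($\exp(-2\eta^2 t)=\alpha_1^t$), so nothing is lost; your version avoids constructing the auxiliary probability space altogether, while the paper's explicit coupling makes the domination by an i.i.d.\ sequence concrete. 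Both are complete proofs.
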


The proof of the lemma is based on large deviation estimates for an auxiliary sequence of random 
variables $(\Xt_t)_{t \in -\N}$, which is equal in law to the standard output sequence of the HMM $M$ on 
negative integer times, $(X_t)_{t \in -\N}$, but is defined on a separate explicit probability space $(\Omegat, \FMt, \Prt)$
(as opposed to the standard implicit probability space for our HMM $M$, $(\Omega, \FM, \P)$).  

For each $w \in \LM(M)$, fix a partition $P_w$ of the unit interval [0,1] into subintervals $I_{w,x}$, $x \in \XM$, such that:
\begin{enumerate}
\item The length of each $I_{w,x}$ is $\P(X_{-|w|-1} = x | X_{-|w|}^{-1} = w)$.
\item All length $0$ intervals $I_{w,x}$ are taken to be the empty set, rather than points. 
\item The leftmost interval $I_{w,x}$ is the closed interval $I_{w,y_{k(w)}} = [0, \P(X_{-|w|-1} = y_{k(w)} | X_{-|w|}^{-1} = w)]$ 
where $k(w) = \min \{k : \P_k(w) \geq \P_j(w), \forall j \in \SM \}$.
\end{enumerate} 
The \emph{reverse time generation space} $(\Omegat, \FMt, \Prt)$ and random variables $(\Xt_t)_{t \in -\N}$ on this space are defined as follows.
\begin{itemize}

\item $(\Ut_t)_{t \in -\N}$ is an i.i.d. sequence of uniform([0,1]) random variables.

\item $(\Omegat, \FMt, \Prt)$ is the canonical probability space (path space) on which the sequence $(\Ut_t)$ is defined 
(i.e. each point $\omega \in \Omegat$ is a sequence of real numbers $\omega = (u_t)_{t \in -\N}$ with $u_t \in [0,1]$, for all $t$).

\item On this space $(\Omegat, \FMt, \Prt)$, the random variables $\Xt_t, t \in - \N$, are defined inductively by:
\begin{enumerate}
\item $\Xt_{-1} = x$ if and only if $\Ut_{-1} \in I_{\lambda,x}$, (where $\lambda$ is the empty word).  
\item Conditioned on $\Xt_{t}^{-1} = w$ ($t \leq -1$), $\Xt_{t-1} = x$ if and only if $\Ut_{t-1} \in I_{w,x}$. 
\end{enumerate}
 
\end{itemize}
 
By induction on the length of $w$, it is easily seen that $\P(X_{-|w|}^{-1} = w) = \Prt(\Xt_{-|w|}^{-1} = w)$ 
for any word $w \in \XM^*$. So,
\begin{align}
\label{eq:EqualityInDistribution}
(\Xt_t)_{t \in -\N} \stackrel{d.}{=} (X_t)_{t \in - \N}.
\end{align}
Of course, this is a somewhat unnecessarily complicated way of constructing the output process of the HMM 
$M$ in reverse time. However, the explicit nature of the underlying space $(\Omegat, \FMt, \Prt)$ will be useful
in allowing us to translate large deviation estimates for the i.i.d. sequence $(\Ut_t)$ to large deviation 
estimates for the sequence $(\Xt_t)$. 

\begin{proof}[Proof of Lemma \ref{lem:BoundBelowProbGtc}]
If $w \in \LM(M)$ is any word with $\P_k(w) \geq \P_j(w)$, for all $j$, then by Bayes Theorem
\begin{align*}
\P(S_{-|w|} = k|X_{-|w|}^{-1} = w) \geq r_* \cdot \P(S_{-|w|} = j|X_{-|w|}^{-1} = w) ~,~ \mbox{ for all } j.
\end{align*}
Thus, for any such $w$, we have
\begin{align}
\label{eq:prob_statek_reverse_bound}
\P(  X&_{-|w|-1} = y_k | X_{-|w|}^{-1} = w) \nonumber \\
& \geq \P(S_{-|w|} = k|X_{-|w|}^{-1} = w) \cdot \P(X_{-|w|-1} = y_k| S_{-|w|} = k) \nonumber \\
& \geq \left( \frac{1}{|\SM|} \cdot \min_{j \in \SM} \left\{ \frac{ \P(S_{-|w|} = k|X_{-|w|}^{-1} = w)}{ \P(S_{-|w|} = j|X_{-|w|}^{-1} = w)} \right\} \right) \cdot \P(X_{-|w|-1} = y_k| S_{-|w|} = k) \nonumber \\
& \geq \frac{r_*}{|\SM|} \cdot p_* .
\end{align} 
The first inequality, of course, uses the fact that the HMM output sequence $X_t^{\infty}$ from time $t$ on is independent of 
the previous output $X_{-\infty}^{-t-1}$ conditioned on the state $S_t$.

Now, let $\Kt_0 \equiv 1$ and, for $t \in \N$, define the random variables $\Kt_t, \Nt_t, \Nt'_t$ on the reverse time generation space $(\Omegat, \FMt, \Prt)$ by
\begin{align*} 
\Kt_t & \equiv k(\Xt_{-t}^{-1}) = \min \{ k \in \SM : \P_k(\Xt_{-t}^{-1}) \geq \P_j(\Xt_{-t}^{-1}), \forall j \in \SM \}  ~, \\
\Nt_t & \equiv | \{ 0 \leq \tau \leq t-1 : \Xt_{-\tau-1} = y_{\Kt_\tau} \} | ~, \\
\Nt'_t & \equiv | \{ 0 \leq \tau \leq t-1 : \Ut_{-\tau-1} \leq p_* r_*/|\SM| \} |.
\end{align*}
By the estimate (\ref{eq:prob_statek_reverse_bound}) and the order of interval placement in $P_w$, we know that the random interval 
$I_{\Xt_{-\tau}^{-1}, y_{\Kt_{\tau}}}$  always contains $[0, p_* r_* / |\SM| ]$. So,
\begin{align*} 
\Ut_{-\tau-1} \leq p_* r_*/|\SM| ~\implies~ \Ut_{-\tau-1} \in I_{\Xt_{-\tau}^{-1}, y_{\Kt_{\tau}}} ~\implies~ \Xt_{-\tau-1} = y_{\Kt_{\tau}}
\end{align*} 
and, therefore, $\Nt_t$ is always lower bounded by $\Nt'_t$:
\begin{align*}
\Nt_t \geq \Nt'_t ~, \mbox{ for each } t \in \N. 
\end{align*} 
But, $\Nt'_t$ is simply $\sum_{\tau = 0}^{t-1} \Zt_{\tau}$, where $\Zt_{\tau} \equiv \indicator_{\{ \Ut_{-\tau-1} \leq p_* r_* / |\SM| \}}$. 
Since the $\Zt_{\tau}$s are i.i.d. with $\Prt(\Zt_{\tau} = 1) = p_* r_*/ |\SM|$, $\Prt(\Zt_{\tau} = 0) = 1 - p_* r_*/ |\SM|$ we may, 
therefore, apply Hoeffding's inequality to conclude that
\begin{align*}
\Prt(\Nt_t < \eta t) \leq \Prt(\Nt'_t < \eta t) \leq \alpha_1^t. 
\end{align*}
The lemma follows since $\Xt_{-t}^{-1} \stackrel{d.}{=} X_{-t}^{-1} \stackrel{d.}{=} X_{0}^{t-1}$, for each $t \in \N$,
which implies 
\begin{align*}
\P(G_t^c) \equiv \P \left( \{ x_0^{t-1} \in \LM_t(M) : N(x_0^{t-1}) < \eta t \} \right) \leq \Prt(\Nt_t < \eta t).
\end{align*}
\end{proof}

\subsubsection{Pair Chain Coupling Bound}
\label{subsubsec:PairChainCouplingBound}

In this subsection we prove the following lemma.

\begin{Lem}
\label{lem:CloseInducedDistributionsForSequenceInGt} 
For any $t \in \N$, $x_0^{t-1} \in G_t$, and states $k, {\kh} \in \SM(x_0^{t-1})$
\begin{align*}
\norm{ \phi_k(x_0^{t-1}) - \phi_{\kh}(x_0^{t-1})  }_{TV} \leq \alpha_2^t 
\end{align*} 
where $\norm{\mu - \nu}_{TV} \equiv \frac{1}{2} \norm{ \mu - \nu}_1$ is the total variational norm 
between two probability distributions (probability vectors) $\mu$ and $\nu$. 
\end{Lem}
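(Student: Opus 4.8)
The plan is to prove the bound by a coupling argument applied to the state process \emph{conditioned on the fixed output word} $x_0^{t-1}$. The starting observation is that once we condition on $\{X_0^{t-1} = x_0^{t-1}\}$, the state sequence $(S_\tau)_{\tau=0}^{t}$ becomes a time-inhomogeneous Markov chain whose transition kernel depends only on $x_0^{t-1}$ and not on the initial state. Indeed, for any reachable source state $s$,
\begin{align*}
Q_\tau(s,j) \equiv \P_k(S_{\tau+1} = j \mid S_\tau = s, X_0^{t-1} = x_0^{t-1}) = \frac{\TM^{(x_\tau)}_{sj} \, \P_j(x_{\tau+1}^{t-1})}{\sum_{l} \TM^{(x_\tau)}_{sl} \, \P_l(x_{\tau+1}^{t-1})},
\end{align*}
which is manifestly independent of the initial state $k$, since given $S_\tau = s$ the past (including $k$ and $X_0^{\tau-1}$) decouples from the future. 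Consequently $\phi_k(x_0^{t-1})$ and $\phi_{\kh}(x_0^{t-1})$ are the time-$t$ marginals of two copies of the \emph{same} kernel $Q_\tau$ started from the deterministic states $k$ and $\kh$ (both legitimate since $k,\kh \in \SM(x_0^{t-1})$).

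The heart of the argument is a uniform lower bound on the smoothed transition probability into the flag target at each ``good'' step. Let $\tau$ be one of the positions counted by $N(x_0^{t-1})$, so $x_\tau = y_{k^*}$ is a flag for a state $k^*$ with $\P_{k^*}(x_{\tau+1}^{t-1}) \geq \P_j(x_{\tau+1}^{t-1})$ for all $j$. For any reachable source $s$ (necessarily $s \in \SM(y_{k^*})$, as it emits $y_{k^*}$), the flag property and the definition of $q_*$ give $\TM^{(y_{k^*})}_{s k^*} / \sum_l \TM^{(y_{k^*})}_{sl} = \P_s(S_1 = k^* \mid X_0 = y_{k^*}) \geq q_*$. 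Substituting into $Q_\tau$ and using the maximum-likelihood property $\P_j(x_{\tau+1}^{t-1}) \leq \P_{k^*}(x_{\tau+1}^{t-1})$ to replace every suffix weight in the denominator by $\P_{k^*}(x_{\tau+1}^{t-1})$ (which only enlarges it), we obtain
\begin{align*}
Q_\tau(s, k^*) = \frac{\TM^{(y_{k^*})}_{s k^*} \, \P_{k^*}(x_{\tau+1}^{t-1})}{\sum_l \TM^{(y_{k^*})}_{sl} \, \P_l(x_{\tau+1}^{t-1})} \geq \frac{\TM^{(y_{k^*})}_{s k^*}}{\sum_l \TM^{(y_{k^*})}_{sl}} \geq q_*,
\end{align*}
\emph{uniformly in the source state} $s$. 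This is exactly where the maximum-likelihood clause in the definition of $N$ is used: without it, the smoothing reweighting by suffix likelihoods could suppress the transition into $k^*$.

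With this minorization I would run the standard coupling of the two chains: evolve the two copies independently under $Q_\tau$ until the first time they share a state, and thereafter let them move together (valid precisely because the kernel is shared, so each coordinate keeps its correct marginal law). At each of the $\geq \eta t$ good positions, each coordinate enters $k^*$ with conditional probability at least $q_*$, so by independence the pair merges with conditional probability at least $q_*^2$, irrespective of the history. Hence the probability that the coordinates have not met by time $t$ is at most $(1 - q_*^2)^{\eta t} = \alpha_2^t$, and the coupling inequality gives
\begin{align*}
\norm{\phi_k(x_0^{t-1}) - \phi_{\kh}(x_0^{t-1})}_{TV} \leq \P(S_t \neq S_t') \leq \alpha_2^t.
\end{align*}
The main obstacle is the uniform smoothing estimate of the third paragraph; the shared-kernel observation and the coupling bookkeeping are routine once that bound is in place.
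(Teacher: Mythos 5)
Your proposal is correct and follows essentially the same route as the paper: the paper's ``pair chain'' is exactly your independent-until-meeting coupling of two copies of the conditioned (smoothed) state chain, and its key step is the same minorization $\P(S_{\tau+1}=\ell_\tau \mid S_\tau = i, X_\tau^{t-1}=x_\tau^{t-1}) \geq q_*$ at the $\geq \eta t$ good positions, which the paper attributes to Bayes' theorem and you verify by the explicit suffix-likelihood computation. The only difference is presentational: you spell out the smoothing kernel and the role of the maximum-likelihood clause in the definition of $N$, which the paper leaves implicit.
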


The proof is based on a coupling argument for an auxiliary time-inhomogeneous Markov chain $(R_{\tau}, \Rh_{\tau})_{\tau = 0}^{t}$ 
on state space $|\SM| \times |\SM|$, which we call the \emph{pair chain}. This chain (defined for given $x_0^{t-1} \in G_t$ and states 
$k , \kh \in \SM(x_0^{t-1})$) is assumed to live on a separate probability space with measure $\Prh$, and is defined  by the following relations:
\begin{align*}
\Prh(R_0 = k, \Rh_0 = {\kh}) = 1 
\end{align*}
and 
\begin{align*}
\Prh(& R_{\tau+1} = j, \Rh_{\tau+1} = {\jh} | R_{\tau} = i, \Rh_{\tau} = {\ih}) \\
 & = \left\{ \begin{array}{lll}
  	\P(S_{\tau+1} = j|S_{\tau} = i, X_{\tau}^{t-1} = x_{\tau}^{t-1}) \cdot \P(S_{\tau+1} = {\jh}|S_{\tau} = {\ih}, X_{\tau}^{t-1} = x_{\tau}^{t-1}), \mbox{ if } i \not= \ih \\
  	\P(S_{\tau+1} = j|S_{\tau} = i, X_{\tau}^{t-1} = x_{\tau}^{t-1}), \mbox{ if } i = \ih \mbox{ and } j = \jh \\
	0, \mbox{ if } i = \ih \mbox{ and }  j \not= \jh\\
  \end{array} \right.
\end{align*}
for $0 \leq \tau \leq t-1$.

By marginalizing it follows that the state sequences $(R_{\tau})$ and $(\Rh_{\tau})$ are each individually
(time-inhomogeneuos) Markov chains with transition probabilities
\begin{align*}
\Prh(R_{\tau+1} &= j | R_{\tau} = i) = \P(S_{\tau+1} = j|S_{\tau} = i, X_{\tau}^{t-1} = x_{\tau}^{t-1}), \mbox{ and } \\
\Prh(\Rh_{\tau+1} &= {\jh} | \Rh_{\tau} = {\ih}) = \P(S_{\tau+1} = {\jh}|S_{\tau} = {\ih}, X_{\tau}^{t-1} = x_{\tau}^{t-1}).
\end{align*} 
Thus, for any $r_0^t, \rh_0^t \in \SM^{t+1}$, 
\begin{align*}
\Prh(R_0^{t} & = r_0^{t}) = \P(S_0^{t} = r_0^{t}|S_0 = k, X_0^{t-1} = x_0^{t-1}),  \mbox{ and }\\
\Prh(\Rh_0^{t} & = \rh_0^{t}) = \P(S_0^{t} =\rh_0^{t}|S_0 = {\kh}, X_0^{t-1} = x_0^{t-1}).
\end{align*}
So, we have the following coupling bound:
\begin{align}
\label{eq:PairChainCouplingBound}
\norm{ \phi_k(x_0^{t-1}) - \phi_{\kh}(x_0^{t-1}) }_{TV}
& \equiv \norm{\P_k(S_{t}|X_0^{t-1} = x_0^{t-1}) - \P_{\kh}(S_t|X_0^{t-1} = x_0^{t-1})}_{TV} \nonumber \\
& \leq \Prh(R_t \not= \Rh_t)
\end{align}
This bound will be used below to prove the lemma.

\begin{proof}[Proof of Lemma \ref{lem:CloseInducedDistributionsForSequenceInGt}] 
 
Fix $x_0^{t-1} \in G_t$ and states $k, \kh \in \SM(x_0^{t-1})$, and define the time set $\Gamma$ by
\begin{align*}
\Gamma \equiv \{0 \leq \tau \leq t-1 : x_{\tau} = y_{\ell}, \mbox{ for some } \ell \in \SM, \mbox{ and } \P_{\ell}(x_{\tau+1}^{t-1}) \geq \P_j(x_{\tau+1}^{t-1}), \forall j \in \SM \}.
\end{align*} 
Also, for $\tau \in \Gamma$, let $\ell_{\tau}$ be the state $\ell$ such that $x_{\tau} =  y_{\ell}$. 

Then, by Bayes Theorem, for any $\tau \in \Gamma$ and $i \in \SM(x_{\tau}^{t-1})$ we have
\begin{align*}
\P(S_{\tau+1} = \ell_{\tau}|S_{\tau} = i, X_{\tau}^{t-1} = x_{\tau}^{t-1}) \geq \P(S_{\tau+1} = \ell_{\tau} | S_{\tau} = i, X_{\tau} = y_{\ell_{\tau}}) \geq q_*.
\end{align*}
Combining this relation with the pair chain coupling bound (\ref{eq:PairChainCouplingBound}) gives
\begin{align*}
\norm{ \phi_k(x_0^{t-1}) - \phi_{\kh}(x_0^{t-1})  }_{TV}
& \leq \Prh(R_{t} \not= \Rh_{t}) \\
& = \prod_{\tau=0}^{t-1} \Prh\left(R_{\tau+1} \not= \Rh_{\tau+1} | R_{\tau} \not= \Rh_{\tau} \right) \\
& \leq \prod_{\tau \in \Gamma} \Prh\left(R_{\tau+1} \not= \Rh_{\tau+1} | R_{\tau} \not= \Rh_{\tau} \right) \\
& \leq  \prod_{\tau \in \Gamma} \max_{i, {\ih} \in \SM(x_{\tau}^{t-1}), i \not= \ih} ~ \Prh \left(R_{\tau+1} \not= \Rh_{\tau+1} | R_{\tau} = i, \Rh_{\tau} = {\ih} \right) \\
& \leq  \prod_{\tau \in \Gamma}  \max_{i, {\ih} \in \SM(x_{\tau}^{t-1}), i \not= \ih} ~ \left( 1 - \Prh\left( R_{\tau+1} = \Rh_{\tau+1} = \ell_{\tau} | R_{\tau} = i, \Rh_{\tau} = {\ih} \right)  \right) \\
& \leq \prod_{\tau \in \Gamma} (1 - q_*^2) \\
& \leq (1 - q_*^2)^{\eta t} \\
& = \alpha_2^t.
\end{align*}
The inequality in the second to last line follows from the fact that $x_0^{t-1} \in G_t$, which implies $|\Gamma| \geq \eta t$.
(Note: We have assumed in the proof that $\Prh(R_{\tau} \not= \Rh_{\tau}) > 0$, for all $0 \leq \tau \leq t-1$. If this is
not the case then $\Prh(R_{t} \not= \Rh_{t}) = 0$, so the conclusion follows trivially from (\ref{eq:PairChainCouplingBound}).) 
\end{proof}

\subsubsection{Convergence of Entropy Rate Approximations}
\label{ConvergenceOfEntropyRateApproximations}

In this subsection we will use the bounds given in the previous two subsections for $\P(G_t^c)$ and the difference in induced distributions 
$\norm{ \phi_k(x_0^{t-1}) - \phi_{\kh}(x_0^{t-1}) }_{TV}$, $x_0^{t-1} \in G_t$ and $k, \kh \in \SM(x_0^{t-1})$, to establish Theorem 
\ref{thm:ExponentialConvergenceEntropyRateEstimatesFlagState}. First, however, we will need two more simple lemmas. 

\begin{Lem}
\label{lem:nextsymbol_currentstate_TVbound}
Let $\mu$ and $\nu$ be two probability distributions on $\SM$, and let $\P_{\mu}(X_0)$ and $\P_{\nu}(X_0)$ denote, 
respectively, the probability distribution of the random variable $X_0$ with $S_0 \sim \mu$ and $S_0 \sim \nu$. Then 
\begin{align*}
\norm{\P_{\mu}(X_0) - \P_{\nu}(X_0)}_{TV} \leq \norm{\mu - \nu}_{TV}.
\end{align*}
\end{Lem}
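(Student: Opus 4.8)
The plan is to recognize this as the standard contraction (data-processing) property of total variation distance under a stochastic kernel. The key observation is that the law of $X_0$ depends on the law of $S_0$ only through a fixed emission kernel that is identical whether we start from $\mu$ or from $\nu$. For the edge-emitting HMM $M$, define the $|\SM| \times |\XM|$ emission matrix $K_{ix} \equiv \P(X_0 = x | S_0 = i) = \sum_{j} \TM^{(x)}_{ij}$. Since $\sum_x \sum_j \TM^{(x)}_{ij} = \sum_j \TM_{ij} = 1$, each row of $K$ sums to $1$; that is, $K$ is row-stochastic.

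First I would write each output law as the image of the corresponding input law under $K$: for every $x \in \XM$,
\[
\P_{\mu}(X_0 = x) = \sum_i \mu_i K_{ix}, \qquad \P_{\nu}(X_0 = x) = \sum_i \nu_i K_{ix},
\]
so that their difference factors through the common kernel as $\P_{\mu}(X_0 = x) - \P_{\nu}(X_0 = x) = \sum_i (\mu_i - \nu_i) K_{ix}$.

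Next I would bound the $\ell^1$ distance between the output laws by the triangle inequality, interchange the order of summation, and use row-stochasticity of $K$ to collapse the inner sum:
\[
\sum_x \Abs{ \sum_i (\mu_i - \nu_i) K_{ix} } \leq \sum_i |\mu_i - \nu_i| \sum_x K_{ix} = \sum_i |\mu_i - \nu_i|.
\]
Dividing by $2$ and recalling $\norm{\cdot}_{TV} = \frac{1}{2} \norm{\cdot}_1$ then gives the claimed bound. There is no genuine obstacle here: the lemma is simply the non-expansiveness of total variation under a stochastic map. The only point worth flagging is that the \emph{same} kernel $K$ governs both conditional laws, which is precisely what lets the difference be written as $\sum_i(\mu_i - \nu_i)K_{ix}$ with a single common factor and makes the contraction go through; were the emission probabilities to depend on the starting distribution, this step would fail.
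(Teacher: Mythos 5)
Your proof is correct and is essentially the paper's own argument: both write $\P_{\mu}(X_0)$ as the mixture $\sum_i \mu_i \P_i(X_0)$ (your kernel rows $K_{i\cdot}$ are exactly the conditional laws $\P_i(X_0)$), apply the triangle inequality in $\ell^1$, and use that each conditional law has unit $\ell^1$ mass before halving to get the TV bound. No differences worth noting.
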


\begin{proof}
It is equivalent to prove the statement for 1-norms. In this case, we have
\begin{align*}
\norm{\P_{\mu}(X_0) - \P_{\nu}(X_0)}_1 
& = \left\norm{\sum_k \mu_k \cdot \P_k(X_0) - \sum_k \nu_k \cdot \P_k(X_0)\right}_1 \\
& \leq \sum_k |\mu_k - \nu_k| \cdot \left\norm{\P_k(X_0)\right}_1 \\
& = \norm{\mu - \nu}_1
\end{align*}
(where $\P_k(X_0)$ is the distribution of the random variable $X_0$ when $S_0 = k$).
\end{proof}

\begin{Lem}
\label{lem:1norm_vs_entropy_diff}
Let $\mu = (\mu_1, ... , \mu_N)$ and $\nu = (\nu_1, ... , \nu_N)$ be two probability measures on 
the finite set $\{1, ... , N\}$. If $\norm{\mu - \nu}_{TV} \leq \epsilon$, with $\epsilon \in [0, 1/e]$, then
\begin{align*}
\left| H(\mu) - H(\nu) \right| \leq N \epsilon \log_2(1/\epsilon).
\end{align*}
\end{Lem}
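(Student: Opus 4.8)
The plan is to reduce the entropy difference to a sum of $N$ one-dimensional terms and to control each term by a modulus-of-continuity estimate for the map $f(t) = -t\log_2 t$. Writing $H(\mu) - H(\nu) = \sum_{i=1}^N \big(f(\mu_i) - f(\nu_i)\big)$ and applying the triangle inequality, it suffices to bound $\sum_{i=1}^N |f(\mu_i) - f(\nu_i)|$. The whole argument then rests on two elementary facts about $f$ on $[0,1]$: that $f$ is concave with $f(0) = f(1) = 0$, and that $f$ is strictly increasing on $[0, 1/e]$ (since $f'(t) = -\log_2 t - \log_2 e > 0$ exactly when $t < 1/e$).

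The key coordinate-wise estimate I would establish first is the following: for $a, b \in [0,1]$ with $|a - b| \le 1/2$, one has $|f(a) - f(b)| \le f(|a-b|)$. To prove it, assume without loss of generality that $a > b$, set $\delta = a - b$, and consider $h(b) = f(b+\delta) - f(b)$ on $b \in [0, 1-\delta]$. Concavity of $f$ makes $f'$ decreasing, so $h$ is decreasing; hence $h$ ranges between its endpoint values $h(0) = f(\delta)$ and $h(1-\delta) = -f(1-\delta)$. Thus $|h(b)| \le \max\{f(\delta), f(1-\delta)\}$, and a short check that $f(1-\delta) \le f(\delta)$ for $\delta \in [0,1/2]$ completes the estimate.

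Next I would record the elementary but crucial observation that, because $\mu, \nu$ are probability vectors, each coordinate difference obeys $|\mu_i - \nu_i| \le \norm{\mu - \nu}_{TV} \le \epsilon$: indeed the total-variation distance equals $\sum_{j: \mu_j > \nu_j}(\mu_j - \nu_j)$, which dominates any single positive discrepancy, and symmetrically for the negative ones. Since $\epsilon \le 1/e \le 1/2$, the coordinate-wise estimate applies with $a = \mu_i$ and $b = \nu_i$, giving $|f(\mu_i) - f(\nu_i)| \le f(|\mu_i - \nu_i|)$. Because $|\mu_i - \nu_i| \le \epsilon \le 1/e$ and $f$ is increasing on $[0, 1/e]$, each term is at most $f(\epsilon) = \epsilon\log_2(1/\epsilon)$. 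Summing over the $N$ coordinates yields $|H(\mu) - H(\nu)| \le N\epsilon\log_2(1/\epsilon)$, as claimed.

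The main obstacle is the coordinate-wise continuity bound itself, and specifically verifying that the decreasing function $h$ never dips below $-f(\delta)$; this reduces to the inequality $f(1-\delta) \le f(\delta)$ for $\delta \in [0,1/2]$, which one checks by showing $-u\ln u + (1-u)\ln(1-u) \ge 0$ on $[0,1/2]$. Everything else (the triangle-inequality split, the per-coordinate total-variation bound, and the monotonicity step) is routine once that estimate is in hand.
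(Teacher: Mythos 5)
Your proof is correct and follows essentially the same route as the paper's: a coordinate-wise triangle-inequality decomposition, followed by a one-variable bound showing that each term $\left| \mu_i \log_2 \mu_i - \nu_i \log_2 \nu_i \right|$ is at most $f(|\mu_i - \nu_i|) \leq f(\epsilon) = \epsilon \log_2(1/\epsilon)$ with $f(t) = -t\log_2 t$. The only difference is that you carry out explicitly the maximization step the paper dispatches as ``single variable calculus techniques,'' via the concavity argument and the inequality $f(1-\delta) \leq f(\delta)$ on $[0,1/2]$; this is a useful elaboration but not a different argument.
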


\begin{proof}
If $\norm{\mu-\nu}_{TV} \leq \epsilon$ then $|\mu_k - \nu_k| \leq \epsilon$, for all $k$. Thus,
\begin{align*}
\left| H(\mu) - H(\nu) \right| 
& = \left| \sum_k \nu_k \log_2(\nu_k) - \mu_k \log_2(\mu_k)  \right| \\
& \leq \sum_k \left| \nu_k \log_2(\nu_k) - \mu_k \log_2(\mu_k)  \right| \\
& \leq N \cdot \max_{\epsilon' \in [0, \epsilon]} \max_{\xi \in [0,1 - \epsilon']}  |(\xi+\epsilon') \log_2(\xi+ \epsilon') - \xi \log_2(\xi)| \\
& = N \cdot \max_{\epsilon' \in [0, \epsilon]} \epsilon' \log_2(1/\epsilon') \\
& = N \cdot \epsilon \log_2(1/\epsilon).
\end{align*} 
The last two equalities, for $0 \leq \epsilon' \leq \epsilon \leq 1/e$, may be verified using single variable calculus techniques 
for maximization (recalling our conventions for continuous extensions of the functions $\xi \log(\xi)$ and $\xi \log(1/\xi)$ 
given in Section \ref{sec:TheEntropyRateAndFiniteBlockEstimates}). 
\end{proof}

\begin{proof}[Proof of Theorem \ref{thm:ExponentialConvergenceEntropyRateEstimatesFlagState}]
If $\alpha_2 = 0$, let $t_0 = 1$. Otherwise, let $t_0 = \lceil \log_{\alpha_2}(1/e) \rceil$. 
We claim, first, that for each $t \geq t_0$ and $x_0^{t-1} \in G_t$, 
\begin{align}
\label{eq:EntropyDiffInXtBoundOnGt}
H(X_t | X_0^{t-1} = x_0^{t-1}) - H(X_t|X_0^{t-1} = x_0^{t-1}, S_0) \leq |\XM| \cdot \alpha_2^t \cdot \log_2(1/\alpha_2^t).
\end{align} 
To see this, note that for any $k \in \SM(x_0^{t-1})$ Lemmas \ref{lem:CloseInducedDistributionsForSequenceInGt} and \ref{lem:nextsymbol_currentstate_TVbound} imply
\begin{align*}
\norm{ \P_k(X_t|&X_0^{t-1} = x_0^{t-1})  - \P(X_t|X_0^{t-1} = x_0^{t-1})  }_{TV} \\
\leq & ~\norm{ \P_k(S_t|X_0^{t-1} = x_0^{t-1}) - \P(S_t|X_0^{t-1} = x_0^{t-1})  }_{TV} \\
\leq & ~\max_{{\kh} \in \SM(x_0^{t-1})} \norm{ \P_k(S_t|X_0^{t-1} = x_0^{t-1}) - \P_{\kh}(S_t|X_0^{t-1} = x_0^{t-1})  }_{TV} \\
= & ~\max_{{\kh} \in \SM(x_0^{t-1})} \norm{ \phi_k(x_0^{t-1}) - \phi_{\kh}(x_0^{t-1}) }_{TV} \\ 
\leq & ~\alpha_2^t.
\end{align*} 
So, by Lemma \ref{lem:1norm_vs_entropy_diff},
\begin{align*}
H(X_t|X_0^{t-1} = x_0^{t-1}) - H(X_t|X_0^{t-1} = x_0^{t-1}, S_0 = k) \leq |\XM| \cdot \alpha_2^t \cdot \log_2(1/\alpha_2^t)
\end{align*} 
for each $k \in \SM(x_0^{t-1})$, as $t \geq t_0$. The claim (\ref{eq:EntropyDiffInXtBoundOnGt}) follows since the entropy $H(X_t|X_0^{t-1} = x_0^{t-1}, S_0)$ 
is, by definition, a weighted average of the entropies $H(X_t | X_0^{t-1} = x_0^{t-1}, S_0 = k)$, $k \in \SM(x_0^{t-1})$:
\begin{align*}
H(X_t|X_0^{t-1} = x_0^{t-1}, S_0) \equiv \hspace{- 3 mm} \sum_{k \in \SM(x_0^{t-1})} \P(S_0 = k | X_0^{t-1} = x_0^{t-1}) \cdot H(X_t | X_0^{t-1} = x_0^{t-1}, S_0 = k).
\end{align*}

Now, for any $t \in \N$,
\begin{align*}
h  & = \lim_{\tau \to \infty} H(X_0|X_{-\tau}^{-1})
\geq \lim_{\tau \to \infty} H(X_0|X_{-\tau}^{-1}, S_{-t})
= H(X_0|X_{-t}^{-1}, S_{-t})
= H(X_t|X_0^{t-1}, S_0).
\end{align*}
Thus, using Lemma \ref{lem:BoundBelowProbGtc} and the estimate (\ref{eq:EntropyDiffInXtBoundOnGt}) the difference
$h(t+1) - h$ may be bounded as follows for all $t \geq t_0$:
\begin{align*}
h(t+1) - h 
& = H(X_t|X_0^{t-1}) - h \\
& \leq H(X_t|X_0^{t-1}) - H(X_t|X_0^{t-1}, S_0) \\
& = \sum_{x_0^{t-1} \in \LM_t(M)} \P(x_0^{t-1}) \cdot \left[ H(X_t|X_0^{t-1} = x_0^{t-1}) - H(X_t|X_0^{t-1} = x_0^{t-1}, S_0) \right] \\
& \leq \P(G_t) \cdot \left( |\XM| \cdot \alpha_2^t \cdot \log_2(1/\alpha_2^t) \right) ~+~ \P(G_t^c) \cdot \log_2|\XM| \\
& \leq 1 \cdot  \left( |\XM| \cdot \alpha_2^t \cdot \log_2(1/\alpha_2^t) \right) ~+~ \alpha_1^t \cdot \log_2|\XM|.
\end{align*}
The theorem follows directly from this inequality. 
\end{proof} 

\subsection{Under Path-Mergeable Assumption}
\label{sec:UnderPathMergeableAssumption}

Building on the results of the previous section for flag-state HMMs, we now proceed to the proofs of 
exponential convergence of the entropy rate block estimates for path-mergeable HMMs. 
The general approach is as follows:
\begin{enumerate}
\item We show that for any path-mergeable HMM $M$ there is some $n \in \N$ such that the 
block model $M^n$ (defined below) is flag-state.
\item We combine Point 1 with the exponential convergence bound for flag-state HMMs given by
Theorem \ref{thm:ExponentialConvergenceEntropyRateEstimatesFlagState}
to obtain the desired bound for path-mergeable HMMs. 
\end{enumerate}
The main theorem, Theorem \ref{thm:ExponentialConvergenceEntropyRateEstimatesPathMergeable}, will be given 
in Section \ref{ConvergenceOfEntropyRateApproximations2} after introducing the block models in Section \ref{sec:BlockModels}. 

\subsubsection{Block Models} 
\label{sec:BlockModels} 

\begin{Def}
Let $M = (\SM, \XM, \{\TM^{(x)}\})$ be an edge-emitting hidden Markov model. For $n \in \N$, the \emph{block model} $M^n$ 
is the triple $(\SM, \WM, \{\QM^{(w)}\})$ where:
\begin{itemize}
\item $\WM = \LM_n(M)$ is the set of length-$n$ words of positive probability. 
\item $\QM_{ij}^{(w)} = \P_i(X_0^{n-1} = w, S_n = j)$ is the $n$-step transition probability from $i$ to $j$ on $w$. 
\end{itemize}
\end{Def}

One can show that if $M$ is irreducible and $n$ is relatively prime to the period of $M's$ graph, $per(M)$, then $M^n$ is also 
irreducible. Further, in this case $M$ and $M^n$ have the same stationary distribution $\pi$ and the stationary output process of
$M^n$ is the same (i.e. equal in distribution) to the stationary output process for $M$, when the latter is considered over length-$n$
blocks rather than individual symbols. That is, for any $w_0^{t-1} \in \WM^t$,
\begin{align*}
\P(X_0^{tn-1} = w_0^{t-1}) = \P^n(W_0^{t-1} = w_0^{t-1})
\end{align*}
where $W_t$ denotes the $t_{th}$ output of the block model $M^n$, and $\P^n$ is the probability distribution over the output 
sequence $(W_t)$ when the initial state of the block model is chosen according to the stationary distribution $\pi$. 

The following important lemma allows us to reduce questions for path-mergeable HMMs to analogous 
questions for flag-state HMMs by considering such block presentations. 

\begin{Lem}
\label{lem:BlockModelIsFlagState}
If $M = (\SM, \XM, \{\TM^{(x)}\}) $ is an edge-emitting HMM with path-mergeable states, then there exists some $n \in \N$,
relatively prime to $per(M)$, such that the block model $M^n= (\SM, \WM, \{\QM^{(w)}\})$ is flag-state. 
\end{Lem}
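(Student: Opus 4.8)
The plan is to recast the flag-state requirement for $M^n$ in terms of the original model $M$, prove that every state possesses a finite ``flag word,'' and then arrange a single common length $n$ coprime to $per(M)$. The first observation is that a length-$n$ word $w \in \LM_n(M)$ is a flag symbol for a state $k$ in $M^n$ exactly when $k \in \delta_i(w)$ for every $i \in \SM(w)$, since the block-model transitions obey $\{j : \QM^{(w)}_{ij} > 0\} = \delta_i(w)$ for length-$n$ words. I will call such a $w$ a \emph{flag word} for $k$. Thus it suffices to produce, for each state $k$, a flag word for $k$, all of one common length $n$ with $\gcd(n, per(M)) = 1$.

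The heart of the argument is a \emph{capture} construction that builds, for a fixed target $k$, a flag word. I maintain a word $W_s$ together with the captured set $\mathrm{Cap}_s = \{j \in \SM(W_s) : k \in \delta_j(W_s)\}$, starting from $W_0 = \lambda$ and $\mathrm{Cap}_0 = \{k\}$. The crucial invariant is that every suffix I append will \emph{return $k$ to itself}: if $\omega$ satisfies $k \in \delta_k(\omega)$, then each already-captured $j$ stays captured, since $k \in \delta_j(W_s)$ and $k \in \delta_k(\omega)$ force $k \in \delta_j(W_s\omega)$. As long as some surviving state $i \in \SM(W_s)$ is not yet captured, I pick any $l^\ast \in \delta_i(W_s)$ (necessarily $l^\ast \neq k$, as $i$ is uncaptured), apply path-mergeability to the pair $(l^\ast, k)$ to obtain a word $v$ and state $c$ with $c \in \delta_{l^\ast}(v) \cap \delta_k(v)$, and then use irreducibility to return from $c$ to $k$ via a word $z$ with $k \in \delta_c(z)$. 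Setting $\omega = vz$ yields both $k \in \delta_{l^\ast}(\omega)$ and $k \in \delta_k(\omega)$, so $W_{s+1} = W_s\omega$ preserves $\mathrm{Cap}_s$ and additionally captures $i$ (through $l^\ast \in \delta_i(W_s)$). Hence $|\mathrm{Cap}_s|$ strictly increases; bounded by $|\SM|$, the process terminates with $\SM(W_s) = \mathrm{Cap}_s$, i.e.\ a flag word $w^{(k)}$ for $k$. \textbf{This is the main obstacle}: the naive idea of concatenating pairwise-merging words fails because a newly introduced state need not be able to emit the already-built prefix; restricting to suffixes that return $k$ to itself, and absorbing each new state through one of its own reachable end-states $l^\ast$, is precisely what circumvents this prefix problem.

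Next I would make the flag-word length adjustable using a \emph{prepend} principle: if $w$ is a flag word for $k$ and $y$ is any word with $\SM(yw) \neq \emptyset$, then $yw$ is again a flag word for $k$, because any $j \in \SM(yw)$ admits some $l \in \delta_j(y) \cap \SM(w)$, whence $k \in \delta_l(w) \subseteq \delta_j(yw)$. Since the underlying graph is irreducible, every state has an incoming edge, so for each $L \geq 0$ there is a word $y$ of length $L$ whose emission ends in the (nonempty) support of $w^{(k)}$; prepending it gives a flag word for $k$ of length $\ell_k + L$, for every $L \geq 0$. Writing $\ell_k = |w^{(k)}|$ and $L^\ast = \max_k \ell_k$, it follows that for every length $n \geq L^\ast$ and every state $k$ there is a flag word for $k$ of length exactly $n$. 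Each such word is emittable, hence of positive stationary probability (as $\pi > 0$), so it is a genuine symbol of $M^n$, i.e.\ a member of $\WM = \LM_n(M)$.

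Finally I fix the length and coprimality. Among the integers $n \geq L^\ast$ there is one with $n \equiv 1 \pmod{per(M)}$, and for that $n$ we have $\gcd(n, per(M)) = 1$. For this $n$ each state $k$ has a length-$n$ flag word, serving as a flag symbol for $k$ in $M^n$; therefore $M^n$ is flag-state, and it is irreducible by the coprimality of $n$ with $per(M)$ as recalled before the lemma. This completes the plan.
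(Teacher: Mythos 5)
Your proposal is correct and follows essentially the same route as the paper: reduce the flag-state property of $M^n$ to the existence of length-$n$ flag words in $M$, build such words by an iterative absorption argument driven by path-mergeability, and pad with prefixes to reach a common length coprime to $per(M)$. The only substantive difference is organizational: the paper runs the absorption once with a drifting anchor state (the already-captured states simply follow the anchor, so no return loops are needed) and then propagates the single resulting flag word to every state by appending connecting words, whereas you fix the target state $k$ from the outset and invoke irreducibility inside each absorption step to loop back to $k$ --- but both constructions resolve the crucial prefix problem (a newly absorbed state need not be able to emit the word built so far) in the same way, by routing the new state through one of its reachable end-states.
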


\begin{proof} 
Extending the flag symbol definition (\ref{eq:FlagSymbolDef}) to multi-symbol words, we will say a word $w \in \LM(M)$ is 
a \emph{flag word} for state $k$ if all states that can generate $w$ can transition to $k$ on $w$:
\begin{align*}
k \in \delta_i(w) ~, \mbox{ for all } i \mbox{ with } \P_i(w) > 0.  
\end{align*}
The following two facts are immediate from this definition:
\begin{enumerate}
\item If $w$ is a flag word for state $i$ and $v$ is any word with $j \in \delta_i(v)$, then $wv \in \LM(M)$ is a flag word for state $j$.
\item If $w$ is a flag word for state $j$ and $v$ is any word with $vw \in \LM(M)$, then $vw$ is also a flag word for $j$.
\end{enumerate}

Irreducibility of the HMM $M$ along with Fact 1 ensure that if some state $i \in \SM$ has a flag word then each state $j \in \SM$ 
has a flag word.  Moreover, by Fact 2, we know that in this case the flag words may all be chosen of some fixed length $n$,
relatively prime to $per(M)$, which implies the block model $M^n$ is flag-state. Thus, it suffices to show that there is a single 
state $i$ with a flag word. 

Below we will explicitly construct a word $v^*$, which is a flag word for some state $i^*$. For notational convenience, we will 
assume throughout that the state set is $\SM = \{1,2,...,|\SM|\}$ and the alphabet is $\XM = \{1,2,...,|\XM| \}$.  Also, for $i,j \in \SM$ 
we will denote by $w_{ij}$ and ${k_{ij}}$ the special word $w$ and state $k$ satisfying the path-mergeability condition (\ref{eq:PMDef}), 
so that $k_{ij} \in \delta_i(w_{ij}) \cap \delta_j(w_{ij})$.

$v^*$ is then constructed by the following algorithm: 

\begin{itemize}
\item $t := 0$, $v_0 := \lambda$ (the empty word), $i_0 := 1$, $\RM := \SM/\{1\}$ 
\item While $\RM \not= \emptyset$ do:
\begin{enumerate} 
\item[] $k_t :=  \min \{k: k \in \RM \}$
\item[] $j_t := \min \{j: j \in \delta_{k_t}(v_t)\}$
\item[] $w_t := w_{i_t j_t}$ 
\item[] $v_{t+1} := v_t w_t$
\item[] $i_{t+1} := k_{i_t j_t}$
\item[] $\RM := \RM / \left(  \{k : i_{t+1} \in \delta_k(v_{t+1}) \} \cup \{k : \P_k(v_{t+1}) = 0 \} \right)$ 
\item[] $t := t+1$ 
\end{enumerate}
\item $v^* := v_t, i^* := i_t$
\end{itemize}

By construction we always have $i_{t+1} \in \delta_{k_t}(v_{t+1}) = \delta_{k_t}(v_t w_t)$. Thus, 
the state $k_t$ is removed from the set $\RM$ in each iteration of the loop, so the algorithm 
must terminate after a finite number of steps. Now, let us assume that the loop terminates at time $t$ with word 
$v^* = v_t = w_0 w_1 ... w_{t-1}$ and state $i^* = i_t$. Then, since $i_0 = 1$ and $i_{\tau+1} \in \delta_{i_{\tau}}(w_{\tau})$ 
for each $\tau = 0,1,...,t-1$, we know $i^* \in \delta_1(v^*)$, and, hence, that $v^* \in \LM(M)$. 
Further, since each state $k \not= 1$ must be removed from the list $\RM$ before the algorithm
terminates, we know that for each $k \not=1$ one of the following must hold:
\begin{enumerate}
\item $i_{\tau+1} \in \delta_k(v_{\tau+1})$, for some $0 \leq \tau \leq t-1$, which implies $i^* \in \delta_k(v^*)$.
\item $\P_k(v_{\tau+1}) = 0$, for some $0 \leq \tau \leq t-1$, which implies $\P_k(v^*) = 0$. 
\end{enumerate}
It follows that $v^*$ is a flag word for state $i^*$. 
\end{proof}

\subsubsection{Convergence of Entropy Rate Approximations}
\label{ConvergenceOfEntropyRateApproximations2}
 
\begin{The}
\label{thm:ExponentialConvergenceEntropyRateEstimatesPathMergeable} 
Let $M$ be an edge-emitting HMM, and let $h$ and $h(t)$ denote the entropy rate and length-$t$ entropy rate estimate for its output process. 
If the block model $M^n$ is flag-state, for some $n$ relatively prime to $per(M)$, then the estimates $h(t)$ converge exponentially with
\begin{align}
\label{eq:ExponentialConvergenceEntropyRateEstimatesPathMergeable}
\limsup_{t \to \infty} \left\{h(t) - h \right\}^{1/t} \leq \alpha^{1/n} 
\end{align}
where $0 < \alpha < 1$ is the convergence rate, given by Theorem \ref{thm:ExponentialConvergenceEntropyRateEstimatesFlagState},
for the entropy estimates of the block model. In particular, by Lemma \ref{lem:BlockModelIsFlagState}, the entropy estimates always
convergence exponentially if $M$ is path-mergeable. 
\end{The}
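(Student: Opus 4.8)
The plan is to transfer the flag-state bound of Theorem~\ref{thm:ExponentialConvergenceEntropyRateEstimatesFlagState}, applied to the block model $M^n$, back to the original HMM $M$ by comparing the two processes' finite-block entropy estimates. The conceptual heart is that one output block of $M^n$ equals $n$ consecutive symbols of $M$, so the block estimate $h^n(t)$ for $M^n$ breaks up, via the chain rule, into a sum of $n$ single-symbol estimates for $M$; the monotone convergence $h(t) \searrow h$ then lets me dominate any single one of those symbol differences by the whole block difference.

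First I would record the two facts linking $M$ and $M^n$. Since $n$ is relatively prime to $per(M)$, the remarks preceding the lemma guarantee that $M^n$ is irreducible with the same stationary distribution $\pi$, so its entropy rate $h^n$ and estimates $h^n(t)$ are well defined. From the identity $\P(X_0^{tn-1} = w_0^{t-1}) = \P^n(W_0^{t-1} = w_0^{t-1})$ stated above, the first $t$ outputs of $M^n$ are distributed exactly as the first $tn$ symbols of $M$; hence $H(W_1^t) = H(X_1^{tn})$, and dividing by $t$ and letting $t \to \infty$ gives $h^n = n h$.

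Next, by the chain rule and stationarity,
\begin{align*}
h^n(t) = H(W_t \mid W_1^{t-1}) = \sum_{j=1}^{n} H\left(X_{(t-1)n+j} \mid X_1^{(t-1)n+j-1}\right) = \sum_{j=1}^{n} h\left((t-1)n + j\right).
\end{align*}
Subtracting $h^n = nh$ and using that each summand $h((t-1)n+j) - h$ is nonnegative (because $h(\cdot)$ decreases to $h$), I obtain, for every $1 \le j \le n$,
\begin{align*}
0 \le h\left((t-1)n+j\right) - h \le \sum_{j'=1}^{n} \left[h\left((t-1)n+j'\right) - h\right] = h^n(t) - h^n .
\end{align*}
Writing an arbitrary single-symbol index as $s = (t-1)n + j$ with $t = \lceil s/n \rceil$, this is the clean sandwich $0 \le h(s) - h \le h^n(t) - h^n$.

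Finally I would apply Theorem~\ref{thm:ExponentialConvergenceEntropyRateEstimatesFlagState} to the flag-state model $M^n$: for any $\epsilon > 0$ and all large $t$, $h^n(t) - h^n \le (\alpha + \epsilon)^t$. Combined with the sandwich this yields $h(s) - h \le (\alpha+\epsilon)^{\lceil s/n\rceil}$, so
\begin{align*}
\{h(s) - h\}^{1/s} \le (\alpha+\epsilon)^{\lceil s/n \rceil / s},
\end{align*}
and letting $s \to \infty$ (using $\lceil s/n\rceil / s \to 1/n$) and then $\epsilon \to 0$ gives the claimed bound $\limsup_{s} \{h(s)-h\}^{1/s} \le \alpha^{1/n}$. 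The ``in particular'' clause is then immediate: Lemma~\ref{lem:BlockModelIsFlagState} supplies an $n$ relatively prime to $per(M)$ with $M^n$ flag-state whenever $M$ is path-mergeable, so the hypothesis is met and exponential convergence follows. I expect no substantive obstacle here; the only care required is bookkeeping — keeping the index convention for $h(\cdot)$ consistent across $M$ and $M^n$, invoking irreducibility of $M^n$ so that $h^n$ and $h^n(t)$ make sense, and handling the ceiling cleanly — rather than any genuine difficulty.
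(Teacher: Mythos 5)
Your proof is correct and follows essentially the same route as the paper: the paper's Lemma~\ref{lem:EntropyRateConvergenceBlockScaling} establishes exactly your two ingredients, namely $g = nh$ and the chain-rule identity $g(t+1) - g = \sum_{\tau=nt}^{n(t+1)-1}\{h(\tau+1)-h\}$, combined with monotonicity of $h(\cdot)$ to sandwich each single-symbol difference by the block difference. The only cosmetic difference is that the paper states the scaling relation as an equality of $\limsup$'s while you prove only the inequality actually needed for the theorem.
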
 
 
\begin{Rems} ~
\begin{enumerate}

\item If $M$ is path-megreable then Lemma \ref{lem:BlockModelIsFlagState} ensures that there will always be some $n$, relatively prime to $per(M)$,
such that the block model $M^n$ is flag-state. However, to obtain an ideal bound on the speed of convergence of the entropy rate estimates for $M$,
one should generally choose $n$ as small as possible, and the construction given in the lemma is not always ideal for this. 

\item The theorem does not explicitly require the HMM $M$ to be path-mergeable for exponential convergence to hold, only that the block model $M^n$ is flag-state, 
for some $n$. However, while it is easy to check if a HMM $M$ is path-mergeable or flag-state (see Section \ref{sec:Testability} for the path-mergeability test), 
it is generally much more difficult to determine if the block model $M^n$ is flag-state for some unknown $n$. Indeed, the computational time to do so may be 
super-exponential in the number of states. And, while asymptotically ``almost all'' HMMs are path-mergeable in the sense of Propostion \ref{prop:Typicality} 
below, ``almost none'' are themselves flag-state. Thus, the flag-state condition in practice is not as directly applicable as the path-mergeability condition.  

\item Another somewhat more useful condition is incompatibility. We say states $i$ and $j$ are \emph{incompatible} if there exists some length $m$
such that the sets of allowed words of length $m$\footnotemark{} that can be generated from states $i$ and $j$ have no overlap (i.e., for
each $w \in \LM_m(M)$, either $\P_i(w) = 0$, or $\P_j(w) = 0$, or both). \footnotetext{Hence, also, the sets of allowed words of all lengths $m' > m.$}

By a small modification of the construction used in Lemma \ref{lem:BlockModelIsFlagState}, it is easily seen that if each pair of distinct states $i,j$ 
of a HMM $M$ is either path-mergeable or incompatible then the block model $M^n$ will be flag-state, for some $n$. This weaker sufficient 
condition may sometimes be useful in practice since incompatibility of state pairs, like path-mergeability, may be checked in reasonable computational
time (using a similar algorithm). 

\end{enumerate}
\end{Rems}
 
\begin{proof}
The claim (\ref{eq:ExponentialConvergenceEntropyRateEstimatesPathMergeable}) is an immediate consequence of 
Theorem \ref{thm:ExponentialConvergenceEntropyRateEstimatesFlagState} and the following simple lemma. 
\end{proof}

\begin{Lem}
\label{lem:EntropyRateConvergenceBlockScaling}
Let $M$ be an edge-emitting HMM with block model $M^n$, for some $n$ relatively prime to $per(M)$. Let $h$ and $h(t)$ be the entropy rate and length-$t$ block
estimate for the output process of $M$, and let $g$ and $g(t)$ be the entropy rate and length-$t$ block estimate for the output process of $M^n$. Then:
\begin{itemize}
\item[(i)] $g = nh$.
\item[(ii)] $\limsup_{t \to \infty} \{h(t) - h\}^{1/t} = \left[ \limsup_{t \to \infty} \{g(t) - g\}^{1/t} \right]^{1/n}$.
\end{itemize}
\end{Lem}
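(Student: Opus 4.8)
The plan is to prove the two parts of Lemma \ref{lem:EntropyRateConvergenceBlockScaling} by exploiting the precise distributional identity between the output process of $M$ taken in length-$n$ blocks and the output process of the block model $M^n$, namely $\P(X_0^{tn-1} = w_0^{t-1}) = \P^n(W_0^{t-1} = w_0^{t-1})$ recorded just before the statement. For part (i), I would start from the fact that block entropies agree: since $W_0^{t-1}$ as a random variable is literally $X_0^{tn-1}$ regrouped, we have $H(W_0^{t-1}) = H(X_0^{tn-1})$. Dividing by $t$ and taking $t \to \infty$ gives, on the left, $g = \lim_{t\to\infty} H(W_0^{t-1})/t$, and on the right, $\lim_{t\to\infty} H(X_0^{tn-1})/t = n \cdot \lim_{t\to\infty} H(X_0^{tn-1})/(tn) = nh$, where the last equality uses that $H(X_0^{m-1})/m \to h$ along the full sequence $m \to \infty$ and hence along the subsequence $m = tn$. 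This establishes $g = nh$ cleanly.

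For part (ii), the key is to relate the per-block conditional increments $g(t) - g$ to the per-symbol increments $h(s) - h$. The natural route is a telescoping identity. Writing block conditional entropies in terms of single-symbol conditional entropies, one has
\begin{align*}
g(t) & = H(W_{t-1} \mid W_0^{t-2}) = H(X_{(t-1)n}^{tn-1} \mid X_0^{(t-1)n - 1}) = \sum_{r=0}^{n-1} h\big((t-1)n + r + 1\big),
\end{align*}
by the chain rule applied to the $n$ symbols in the last block, each conditioned on all prior symbols. Subtracting $g = nh$ and using $g = nh$ from part (i) gives
\begin{align*}
g(t) - g = \sum_{r=0}^{n-1} \big[ h\big((t-1)n + r + 1\big) - h \big].
\end{align*}
Since every summand is nonnegative (recall $h(s) \searrow h$) and the largest index term dominates by monotonicity, the sum is sandwiched between $h(tn) - h$ and $n\big(h((t-1)n+1) - h\big)$, both controlled by the single-symbol increments at indices $\Theta(tn)$.

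From this sandwich I would extract the exponential-rate identity. Taking $t$-th roots, the factor $n$ and the finite window of offsets $r$ contribute nothing to the limsup of $t$-th roots, so $\limsup_t \{g(t)-g\}^{1/t}$ equals $\limsup_t \{h(tn)-h\}^{1/t}$. Now if I set $\beta \equiv \limsup_{s\to\infty}\{h(s)-h\}^{1/s}$, then along the subsequence $s = tn$ one gets $\{h(tn)-h\}^{1/t} = \big(\{h(tn)-h\}^{1/(tn)}\big)^{n}$, whose limsup is $\beta^n$; the monotonicity $h(s)\searrow h$ is what guarantees that restricting to the subsequence $s=tn$ does not lose the limsup, since intermediate terms are squeezed between neighboring subsequence values. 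Hence $\limsup_t \{g(t)-g\}^{1/t} = \beta^n$, which rearranges to exactly (ii). The main obstacle I anticipate is the last step: carefully justifying that passing to the arithmetic subsequence $s = tn$ preserves the limsup of the $s$-th roots. This needs the monotone decay $h(s)\searrow h$ together with the observation that for $s$ between $tn$ and $(t+1)n$ the increment $h(s)-h$ lies between the two bracketing block-indexed values, so no faster- or slower-decaying subsequence is hidden between the sampled points; everything else is bookkeeping with the chain rule and nonnegativity.
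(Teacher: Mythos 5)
Your proposal is correct and follows essentially the same route as the paper: part (i) is the same direct computation with $H(W_0^{t-1}) = H(X_0^{nt-1})$, and part (ii) rests on the same chain-rule identity $g(t)-g = \sum_{r=0}^{n-1}\{h((t-1)n+r+1)-h\}$ combined with monotonicity of $h(\cdot)$. The only difference is that you spell out the sandwich and the subsequence-of-$t$-th-roots argument that the paper compresses into ``the claim follows from this relation and the fact that the block estimates are weakly decreasing,'' and your justification of that step is sound.
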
 

\begin{proof}
(i) is a direct computation:
\begin{align*}
g = \lim_{t \to \infty} \frac{H(W_0^{t-1})}{t} = \lim_{t \to \infty} \frac{H(X_0^{nt-1})}{t} = \lim_{t \to \infty} n \cdot \frac{H(X_0^{nt-1})}{nt} = nh
\end{align*}
where $X_t$ and $W_t$ denote, respectively, the $t_{th}$ output symbol of $M$ and $M^n$. 
To show (ii), note that by the entropy chain rule (see, e.g., \cite{Cove06a})
\begin{align*}
g(t+1) - g 
= H(W_t|W_0^{t-1}) - nh
= H(X_{nt}^{n(t+1)-1} | X_0^{nt-1}) - nh \\
= \left\{ \sum_{\tau = nt}^{n(t+1)-1} H(X_{\tau}|X_0^{\tau-1}) \right\} - nh 
= \sum_{\tau = nt}^{n(t+1)-1} \{h(\tau+1) - h\}.
\end{align*} 
The claim follows from this relation and the fact that the block estimates $h(t)$ are weakly decreasing (i.e. nonincreasing). 
\end{proof} 

\section{Relation to State-Emitting HMMs} 
\label{sec:RelationToStateEmittingHMMs}

In Section \ref{sec:ResultsForEdgeEmittingHMMs} we established exponential convergence of the entropy rate estimates
$h(t)$ for path-mergeable, edge-emitting HMMs. The following simple proposition shows that path-mergeability, as well as 
the flag-state property, both translate directly from state-emitting to edge-emitting HMMs. Thus, exponential convergence 
of the entropy rate estimates $h(t)$ also holds for path-mergeable, state-emitting HMMs.

\begin{Prop}
\label{prop:EquivalencePropsStateEdgeEmitting}
Let $M = (\SM, \XM, \TM, \OM)$ be a state-emitting HMM, and let $M' = (\SM, \XM, \{\TM^{'(x)}\})$ be the corresponding
edge-emitting HMM defined by the conversion algorithm of Section \ref{sec:EquivalenceOfModelTypes} with 
$\TM^{'(x)}_{ij} = \TM_{ij} \OM_{jx}$. Then:
\begin{enumerate}
\item[(i)] $M$ is path-mergeable if and only if $M'$ is path-mergeable.
\item[(ii)] $M$ is a flag-state HMM if and only if $M'$ is a flag-state HMM. 
\end{enumerate}
\end{Prop}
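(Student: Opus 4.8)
The plan is to reduce both equivalences to a single structural fact: the reachability sets $\delta_i(\cdot)$ are \emph{identical} for $M$ and $M'$. Both the path-mergeability condition \eqnref{eq:PMDef} and the flag-state condition \eqnref{eq:FlagSymbolDef} are phrased entirely in terms of the sets $\delta_i(w)$, with no further reference to the transition probabilities. Hence, once I show that for every state $i \in \SM$ and every word $w$ the set $\delta_i^{M}(w)$ computed under the state-emitting convention coincides with the set $\delta_i^{M'}(w)$ computed under the edge-emitting convention, statements (i) and (ii) follow immediately by unwinding the respective definitions.

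To prove this key identity, I expand each generation probability as a sum over intermediate state paths $i = s_0, s_1, \ldots, s_n = j$, where $n = |w|$ and $w = x_1 \cdots x_n$. Under the state-emitting convention $X_t$ is emitted from the visited state $s_t$, so
\[
\P_i^{M}(X_1^{n} = w, S_n = j) = \sum_{i = s_0, \ldots, s_n = j} \prod_{t=1}^{n} \TM_{s_{t-1} s_t}\, \OM_{s_t x_t}.
\]
Under the edge-emitting convention for $M'$, reading the same symbol string as $X_0^{n-1}$ (so the $t$-th traversed edge carries $x_{t+1}$) and substituting the conversion rule $\TM^{'(x)}_{ij} = \TM_{ij}\OM_{jx}$ gives
\begin{align*}
\P_i^{M'}(X_0^{n-1} = w, S_n = j)
&= \sum_{i = s_0, \ldots, s_n = j} \prod_{t=0}^{n-1} \TM^{'(x_{t+1})}_{s_t s_{t+1}} \\
&= \sum_{i = s_0, \ldots, s_n = j} \prod_{t=0}^{n-1} \TM_{s_t s_{t+1}}\, \OM_{s_{t+1} x_{t+1}}.
\end{align*}
Reindexing the last product by $t \mapsto t-1$ makes it term-for-term identical to the state-emitting sum. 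Thus the two probabilities are equal for every fixed $i, j, w$, and in particular one is strictly positive if and only if the other is (both being sums of nonnegative terms over the same index set). This yields $\delta_i^{M}(w) = \delta_i^{M'}(w)$ for all $i$ and $w$; the null-word case $\delta_i(\lambda) = \{i\}$ holds by convention in both models.

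With the identity of the $\delta$-sets in hand, (i) and (ii) are purely formal: for (i), a witness $(w,k)$ with $k \in \delta_i(w) \cap \delta_j(w)$ for $M$ is simultaneously a witness for $M'$ and conversely, so the two models are path-mergeable on exactly the same pairs; for (ii), a symbol $x$ satisfies $k \in \delta_i(x)$ for all $i$ with $\delta_i(x) \neq \emptyset$ in $M$ if and only if it does so in $M'$, so the flag symbols of each state agree. I expect no deep obstacle here; the only point requiring care is the bookkeeping in the key identity, namely correctly matching the off-by-one time indexing of the two conventions (current-state emission versus destination-state emission on an edge) so that the summands genuinely coincide rather than merely corresponding. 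This is routine once the shift $t \mapsto t-1$ is set up, but it is the single place where an error could silently creep in.
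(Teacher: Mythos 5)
Your proof is correct and follows essentially the same route as the paper: both establish that the transition functions $\delta_i(w)$ of $M$ and $M'$ coincide (the paper via an induction showing equality of joint state-path/symbol probabilities, you by expanding the marginal over paths and reindexing the product), and both then observe that (i) and (ii) follow immediately since path-mergeability and the flag-state property depend only on the sets $\delta_i(w)$. The index bookkeeping in your key identity is handled correctly.
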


\begin{proof}
By induction on $t$, it is easily seen that for each $t \in \N$, $x_1^t \in \XM^t$, and $s_0^t \in \SM^{t+1}$
\begin{align*}
\P_i(S_0^t = s_0^t, X_1^t = x_1^t) = \P_i'(S_0^t = s_0^t, X_0^{t-1} = x_1^t) 
\end{align*}
where $\P_i$ and $\P_i'$ are, respectively, the measures on state-symbol sequences $(S_t, X_t)_{t \geq 0}$ for $M$ and $M'$
from initial state $S_0 = i$. This implies that the transition function $\delta_i(w)$ for $M$ is the same as the transition function 
$\delta'_i(w)$ for $M'$, i.e. $\delta_i(w) = \delta'_i(w)$ for each $i \in \SM, w \in \XM^*$. Both claims follow immediately from 
the equivalence of transition functions. 
\end{proof}

\section{Typicality and Testability of the Path-Meregeability Property} 
\label{sec:TypicalityTestabilityOfPathMergeability}

\subsection{Testability}
\label{sec:Testability}

The following algorithm to test for path-mergeability of state pairs in an edge-emitting HMM is a small modification of the
standard table filling algorithm for deterministic finite automata (DFAs) given in \cite{Hopc01}. To test for path-mergeability
in a state-emitting HMM one may first convert it to an edge-emitting HMM and then test the edge-emitting HMM for path-mergeability, 
as the result will be equivalent by Proposition \ref{prop:EquivalencePropsStateEdgeEmitting}.

\begin{Alg}
\label{alg:PathMergeabilityTest}
Test for path-mergeable state pairs in an edge-emitting HMM. 
\begin{enumerate}
\item \emph{Initialization Step}
\begin{itemize} 
\item Create a table with boxes for each pair of distinct states $(i,j)$. Initially all boxes are unmarked.
\item Then, for each state pair $(i,j)$, mark the box for pair $(i,j)$ if there is some symbol $x$ with $\delta_i(x) \cap \delta_j(x) \not= \emptyset$.
\end{itemize}
\item \emph{Inductive Step} 
\begin{itemize} 
\item If the box for pair $(i',j')$ is already marked and $i' \in \delta_i(x), j' \in \delta_j(x)$, for some symbol $x$, then mark the box for pair $(i,j)$. 
\item Repeat until no more new boxes can be marked this way.
\end{itemize}
\end{enumerate}
\end{Alg}

By induction on the length of the minimum path-merging word $w$ for a given state pair $(i,j)$ it is easily seen that 
a state pair $(i,j)$ ends up with a marked box under Algorithm \ref{alg:PathMergeabilityTest} if and only if it is path-mergeable.
Thus, the HMM is itself path-mergeable if and only if all state pairs $(i,j)$ end up with marked boxes. This algorithm is also
reasonably fast (polynomial time) if the inductions in the second step are carried out in an efficient manner.
In particular, a decent encoding of the inductive step gives a maximum run time $O(m \cdot n^4)$ \footnotemark{}.

\footnotetext{For the standard DFA table filling algorithm an efficient encoding of the inductive step, as described in \cite{Hopc01}, 
gives a maximal run time $O(m \cdot n^2)$. However, DFAs, by definition, have deterministic transitions; for each state $i$ 
and symbol $x$ there is only 1 outgoing edge from state $i$ labeled with symbol $x$. The increased run time for checking path-mergeability
is due to the fact that each state $i$ may have up to $n$ outgoing transitions on any symbol $x$ in a general HMM topology.} 

\subsection{Typicality}
\label{sec:Typicality}

By an \emph{edge-emitting HMM topology} we mean simply the directed, symbol-labeled graph, without assigned probabilities 
for the allowed transitions. Similarly, by a \emph{state-emitting HMM topology} we mean the directed graph of allowed state transitions 
along with the sets of allowed output symbols for each state, without the probabilities of allowed transitions or emission probabilities of 
allowed symbols from each state. By a \emph{labeled, $n$-state, $m$-symbol topology} (either edge-emtting or state-emitting) we mean 
a topology with $n$ states $\{1,...,n\}$ and $m$ symbols $\{1,...,m\}$, or some subset thereof. That is, we allow that not all the symbols 
are actually used. 

Clearly, the path-mergeability property depends only on the topology of a HMM. The following proposition shows that this property is 
asymptotically typical in the space of HMM topologies. 

\begin{Prop}
\label{prop:Typicality}
If a HMM topology is selected uniformly at random from all irreducible, labeled, $n$-state, $m$-symbol topologies then it will be path-mergeable 
with probability $1 - O(\alpha^n)$, uniformly in $m$,  for some fixed constant $0 < \alpha < 1$. 
\end{Prop}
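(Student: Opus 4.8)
The plan is to estimate the probability that a randomly selected topology \emph{fails} to be path-mergeable, and show this failure probability decays geometrically in $n$. By the definition of path-mergeability and Algorithm \ref{alg:PathMergeabilityTest}, a topology is path-mergeable precisely when every pair of distinct states can be merged. It therefore suffices to bound the probability that \emph{some} pair $(i,j)$ is not path-mergeable and apply a union bound over the $\binom{n}{2}$ pairs. The key observation is that a single pair $(i,j)$ is \emph{immediately} mergeable (marked in the initialization step) whenever there is a symbol $x$ and a common target state $k$ with edges $i \goesonx k$ and $j \goesonx k$ both present. So the first step is to estimate, for a fixed pair $(i,j)$, the probability $q_{ij}$ that no such common one-step merge exists; then $\P(\text{not path-mergeable}) \leq \sum_{i<j} q_{ij}$, and it remains to show $q_{ij} = O(\alpha^n)$ uniformly.

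The main technical work is setting up the random topology model and computing $q_{ij}$. First I would fix a precise notion of ``uniformly at random from all irreducible, labeled $n$-state $m$-symbol topologies.'' The cleanest handle is to relate the uniform distribution over \emph{irreducible} topologies to a simpler product model in which each potential labeled edge $(i \goesonx j)$ is included independently, then argue that conditioning on irreducibility changes probabilities by at most a factor that stays bounded (indeed, irreducibility itself holds with probability $1 - O(\beta^n)$ in such sparse-independent or fixed-outdegree models, so the conditioning is benign). Under a product-type model, for a fixed target $k$ and symbol $x$ the events ``$i \goesonx k$'' and ``$j \goesonx k$'' are independent, so the probability that $i$ and $j$ can be simultaneously sent to $k$ on $x$ is bounded below by some $c>0$ depending only on the edge-inclusion parameters (not on $n$). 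Since there are $n$ candidate targets $k$ and $m$ symbols, and the corresponding merge-events across distinct $k$ are independent (they involve disjoint sets of potential edges), the probability that \emph{none} of them occurs is at most $(1-c)^{n}$, giving $q_{ij} \leq (1-c)^n =: \alpha^n$ with $\alpha = 1 - c \in (0,1)$ independent of $m$.

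Combining these, the union bound yields
\begin{align*}
\P(\text{not path-mergeable}) \leq \binom{n}{2}\, \alpha^n = O(n^2 \alpha^n) = O(\tilde\alpha^n)
\end{align*}
for any $\tilde\alpha \in (\alpha,1)$, which absorbs the polynomial prefactor and establishes the claim $1 - O(\alpha^n)$ uniformly in $m$. Note that I only used one-step merges in the lower bound on mergeability, which is exactly what makes the estimate uniform in $m$: adding more symbols can only increase the number of merge opportunities, never decrease it, so restricting attention to immediate (length-one) merges gives a bound that does not worsen as $m$ grows.

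I expect the main obstacle to be the precise specification of the uniform random topology model and the accompanying argument that conditioning on irreducibility is harmless. The core combinatorial estimate (independence of common-target events across distinct targets, hence the geometric decay) is robust and easy once the model is pinned down; the delicate point is ensuring the ``uniform over irreducible topologies'' normalization does not distort the per-pair failure probability $q_{ij}$ by more than a constant factor, and that whatever structural assumption is placed on allowed topologies still leaves each state with enough candidate incoming edges from both $i$ and $j$ to make the constant $c$ strictly positive and independent of $n$ and $m$. I would handle this by bounding the ratio of the number of irreducible topologies to all topologies below by a constant (or $1-o(1)$), so that conditional and unconditional failure probabilities differ only by a bounded factor that is absorbed into the $O(\alpha^n)$ estimate.
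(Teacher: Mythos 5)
Your proposal is correct and follows essentially the same route as the paper: identify the uniform distribution on labeled topologies with the product model in which each symbol-labeled edge is independently present with probability $1/2$, fix a single symbol $x$ (which is exactly what makes the bound uniform in $m$), observe that a pair $(i,j)$ fails even to have a \emph{one-step} merge on $x$ with probability $(3/4)^n$ because the $n$ candidate common targets involve disjoint edge sets, and finish with a union bound over the $\binom{n}{2}$ pairs. The paper's version of this estimate is slightly less direct (it first conditions on the event $E_{1,x}$ that every out-neighborhood on symbol $x$ has size $> n/4$, then bounds the probability that two such neighborhoods are disjoint by $(3/4)^{n/4}$), but that intermediate event is reused to establish irreducibility, which is where the two write-ups genuinely diverge. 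You propose to control the conditioning on irreducibility by lower-bounding the fraction of irreducible topologies by a constant, so that $\P(B \mid E_3) \le \P(B)/\P(E_3)$ inflates the failure probability by only a bounded factor; this works, but it leaves you an extra lemma to prove. The paper sidesteps it entirely by bounding the \emph{good} event from below: $\P(\text{path-mergeable} \mid E_3) \ge \P(\text{path-mergeable},\, E_3) \ge \P(E_{1,x})\bigl(1 - \P(E_{2,x}^c \mid E_{1,x}) - \P(E_3^c \mid E_{1,x})\bigr)$, using only that $\P(E_3) \le 1$ and that irreducibility holds with exponentially small failure probability given $E_{1,x}$ (any two states are joined by a path of length $2$ through one of the $\ge n/4$ out-neighbors). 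If you adopt that one-line trick, the step you flagged as delicate disappears, and the rest of your argument goes through as written.
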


\begin{Rems} ~
\begin{enumerate}
\item The claim holds for either a randomly selected edge-emitting topology or a randomly selected state-emitting topology, though we will present 
the proof only in the edge-emitting case as the other case is quite similar.
\item A similar statement also holds if one does not require the topology to be irreducible, and, in fact, the proof is somewhat simpler in this case. 
However, because we are interested only in irreducible HMMs where there is a unique stationary state distribution $\pi$ and well defined 
entropy rate, we feel it is more appropriate to consider a randomly selected irreducible HMM topology. 
\item For fixed $m \in \N$, there are exponentially more $n$-state, $(m+1)$-symbol topologies than $n$-state, $m$-symbol topologies,
as $n \rightarrow \infty$. Thus, it follows from the proposition that a similar claim holds if one considers topologies with $n$-states and exactly
$m$ symbols used, rather than the $n$-state, $m$-symbol topologies we consider in which some of the symbols may not be used.
\item We are considering labeled topologies, so that the state labels $1,...,n$ and symbol labels $1,...,m$ are not interchangeable. 
That is, two topologies that would be the same under a permutation of the symbol and/or state labels are considered distinct, not equivalent. 
We believe a similar statement should also hold if one considers a topology-permutation-equivalence class chosen uniformly at random from 
all such equivalence classes with $n$ states and $m$ symbols. However, the proof in this case seems more difficult. 
\end{enumerate}
\end{Rems}

\begin{proof}[Proof (edge-emitting)]
Assume, at first, that a topology $T$ is selected uniformly at random from all labeled, $n$-state, $m$-symbol topologies without requiring 
irreducibility, or even that each state has any outgoing edges. That is, for each pair of states $i,j$ and symbol $x$ we have, independently, 
a directed edge from state $i$ to state $j$ labeled with symbol $x$ present with probability 1/2 and absent with probability 1/2. 

Let $A_{i,x} \equiv \{j : \exists \mbox{ an edge from $i$ to $j$ labeled with $x$}\}$ be the set of states that state $i$ can transition to on symbol 
$x$, and let $N_{i,x} \equiv |A_{i,x}|$ be the number of states that state $i$ can transition to on symbol $x$. Also, define the following events:
\begin{align*}
& E_{1,x} \equiv \{ N_{i,x} > n/4, \mbox{ for all } i \} ~,~ x \in \XM \\
& E_{2,x} \equiv \{ A_{i,x} \cap A_{j,x} \not= \emptyset, \mbox{ for all } i \not= j\} ~,~ x \in \XM \\
& E_3 \equiv \{ i \rightarrow j, \mbox{ for all } i,j \} = \{T \mbox{ is irreducible} \}
\end{align*}
where $i \rightarrow j$ means there is a path from $i$ to $j$ in the topology (directed graph) $T$. We note that: 
\begin{enumerate}

\item By Hoeffding's inequality, 
\begin{align*}
\P(N_{i,x} \leq n/4) = \P(N_{i,x} - \E(N_{i,x}) \leq - n/4) \leq e^{-2n(1/4)^2} = e^{-n/8}.
\end{align*}
So, for each $x$, $\P(E_{1,x}^c) \leq n e^{-n/8}$.

\item Since the sets $A_{i,x}, A_{j,x}$ are independent for all $i \not= j$, $\P( A_{i,x} \cap A_{j,x} = \emptyset |E_{1,x}) \leq (3/4)^{n/4}$, 
for all $i \not= j$.  Hence, $\P(E_{2,x}^c |E_{1,x}) \leq {n \choose 2} (3/4)^{n/4}$, for each $x$.

\item Since there are at least $\lceil n/4 \rceil -1$ states in the set $A_{i,x}/\{i\}$ on the event $E_{1,x}$, we have
\begin{align*}
\P(i \not\rightarrow j |E_{1,x}) 
\leq \P(j \not\in A_{k,x}, \mbox{ for all } k \in A_{i,x}/\{i\} | E_{1,x})
\leq (3/4)^{n/4-1}.
\end{align*}
Hence, $\P(E_3^c | E_{1,x}) \leq n^2 (3/4)^{n/4-1}$, for each $x$. 

\end{enumerate}
Now, the probability that an irreducible, labeled, $n$-state, $m$-symbol HMM topology selected uniformly at random is 
path-mergeable is $\P(T \mbox{ is path-mergeable} | E_3)$, where $\P$ is, as above, the probability measure on the random 
topologies $T$ given by selecting each directed symbol-labeled edge to be, independently, present or absent with probability 
1/2. Using points 1,2, and 3, and fixing any symbol $x \in \XM$, we may bound this probabilities follows: 
\begin{align*}
\P(T & \mbox{ is path-mergeable} | E_3) \\
& \geq \P(T \mbox{ is path-mergeable}, E_3) \\
& \geq \P(T \mbox{ is path-mergeable}, E_3, E_{1,x}) \\
& = \P(E_{1,x}) \cdot \P(T \mbox{ is path-mergeable}, E_3 |E_{1,x}) \\
& = \P(E_{1,x}) \cdot \left(1 - \P(T \mbox{ is not path-mergeable}, E_3|E_{1,x}) - \P(E_3^c|E_{1,x}) \right) \\
& \geq \P(E_{1,x}) \cdot \left(1 - \P(E_{2,x}^c| E_{1,x}) - \P(E_3^c|E_{1,x}) \right) \\
&\geq (1 - ne^{-n/8}) \cdot \left(1 - {n \choose 2}(3/4)^{n/4} - n^2 (3/4)^{n/4-1} \right) \\
& = 1 - O(\alpha^n) ~,
\end{align*} 
for any $\alpha > (3/4)^{1/4}$. (Note that $(3/4)^{1/4} \approx 0.931 > e^{-1/8} \approx 0.882$.)

\end{proof}

\section*{Acknowledgments}
The author thanks Jim Crutchfield for helpful discussions. This work was partially supported by ARO grant W911NF-12-1-0234 and VIGRE grant DMS0636297. 

\bibliography{ref,chaos}

\begin{thebibliography}{10}

\bibitem{Gilb59a}
E.~J. Gilbert.
\newblock On the identifiability problem for functions of finite {Markov}
  chains.
\newblock {\em Ann. Math. Statist.}, 30(3):688--697, 1959.

\bibitem{Blac57a}
D.~Blackwell and L.~Koopmans.
\newblock On the identifiability problem for functions of finite {Markov}
  chains.
\newblock {\em Ann. Math. Statist.}, 28(4):1011--1015, 1957.

\bibitem{Blac57b}
D.~Blackwell.
\newblock The entropy of functions of finite-state {Markov} chains.
\newblock In {\em Transactions of the first Prague conference on information
  theory, statistical decision functions, random processes}, pages 13--20.
  Publishing House of the Czechoslovak Academy of Sciences, 1957.

\bibitem{Juan91a}
B.~H. Juang and L.~R. Rabiner.
\newblock Hidden {Markov} models for speech recognition.
\newblock {\em Technometrics}, 33(3):251--272, 1991.

\bibitem{Rabi89a}
L.~R. Rabiner.
\newblock A tutorial on hidden {Markov} models and selected applications in
  speech recognition.
\newblock {\em IEEE Proc.}, 77:257--286, 1989.

\bibitem{Bahl83a}
L.~R. Bahl, F.~Jelinek, and R.~L. Mercer.
\newblock A maximum likelihood approach to continuous speech recognition.
\newblock {\em IEEE Trans. Pattern Analysis and Machine Intelligence},
  PAMI-5:179--190, 1983.

\bibitem{Jeli76a}
F.~Jelinek.
\newblock Continuous speech recognition by statistical methods.
\newblock {\em Proc. IEEE}, 64:532--536, 1976.

\bibitem{Siep04a}
A.~Siepel and D.~Haussler.
\newblock Combining phylogenetic and hidden {Markov} models in biosequence
  analysis.
\newblock {\em J. Comp. Bio.}, 11(2-3):413--428, 2004.

\bibitem{Eddy98a}
S.~R. Eddy.
\newblock Profile hidden {Markov} models.
\newblock {\em Bioinformatics}, 14(9):755--763, 1998.

\bibitem{Karp98a}
K.~Karplus, C.~Barrett, and R.~Hughey.
\newblock Hidden {Markov} models for detecting remote protein homologies.
\newblock {\em Bioinformatics}, 14(10):846--856, 1998.

\bibitem{Eddy95a}
S.~R. Eddy, G.~Mitchison, and R.~Durbin.
\newblock Maximum discrimination hidden {Markov} models of sequence consensus.
\newblock {\em J. Comp. Bio.}, 2(1):9--23, 1995.

\bibitem{Bald94a}
P.~Baldi, Y.~Chauvin, T.~Hunkapiller, and M.~A. McClure.
\newblock Hidden {Markov} models of biological primary sequence information.
\newblock {\em PNAS}, 91:1059--1063, 1994.

\bibitem{Crut01a}
J.~P. Crutchfield and D.~P. Feldman.
\newblock Regularities unseen, randomness observed: Levels of entropy
  convergence.
\newblock {\em CHAOS}, 13(1):25--54, 2003.

\bibitem{Sowe92a}
R.~B. Sowers and A.~M. Makowski.
\newblock Discrete-time filtering for linear systems in correlated noise with
  non-gaussian initial conditions: Formulas and asymptotics.
\newblock {\em IEEE Trans. Automat. Control}, 37:114--121, 1992.

\bibitem{Atar95a}
R.~Atar and O.~Zeitouni.
\newblock Lyapunov exponents for finite state nonlinear filtering.
\newblock {\em SIAM J. Control Optim.}, 35:36--55, 1995.

\bibitem{Atar97a}
R.~Atar and O.~Zeitouni.
\newblock Exponential stability for nonlinear filtering.
\newblock {\em Ann. Inst. H. Poincare Prob. Statist.}, 33(6):697--725, 1997.

\bibitem{Glan00b}
F.~Le Gland and L.~Mevel.
\newblock Exponential forgetting and geometric ergodicity in hidden {Markov}
  models.
\newblock {\em Math. Control Signals Systems}, 13:63--93, 2000.

\bibitem{Chig04a}
P.~Chigansky and R.~Lipster.
\newblock Stability of nonlinear filters in nonmixing case.
\newblock {\em Ann. App. Prob.}, 14(4):2038--2056, 2004.

\bibitem{Douc09a}
R.~Douc, G.~Fort, E.~Moulines, and P.~Priouret.
\newblock Forgetting the initial distribution for hidden {Markov} models.
\newblock {\em Stoch. Proc. App.}, 119(4):1235--1256, 2009.

\bibitem{Coll09a}
P.~Collet and F.~Leonardi.
\newblock Loss of memory of hidden {Markov} models and {Lyapunov} exponents.
\newblock {\em arXiv/0908.0077}, 2009.

\bibitem{Birc62a}
J.~Birch.
\newblock Approximations for the entropy for functions of {Markov} chains.
\newblock {\em Ann. Math. Statist}, 33(3):930--938, 1962.

\bibitem{Hoch99a}
B.M. Hochwald and P.R. Jelenkovic.
\newblock State learning and mixing in entropy of hidden {Markov} processes and
  the {Gilbert-Elliott} channel.
\newblock {\em IEEE Trans. Info. Theory}, 45(1):128--138, 1999.

\bibitem{Pfis03a}
H.~Pfister.
\newblock {\em On the capacity of finite state channels and analysis of
  convolutional accumulate-m codes}.
\newblock PhD thesis, University of California, San Diego, 2003.

\bibitem{Trav11a}
N.~F. Travers and J.~P. Crutchfield.
\newblock Exact synchronization for finite-state sources.
\newblock {\em J. Stat. Phys.}, 145(5):1181--1201, 2011.

\bibitem{Trav11b}
N.~F. Travers and J.~P. Crutchfield.
\newblock Asymptotic synchronization for finite-state sources.
\newblock {\em J. Stat. Phys.}, 145(5):1202--1223, 2011.

\bibitem{Han06}
G.~Han and B.~Marcus.
\newblock Analyticity of entropy rate of hidden {Markov} chains.
\newblock {\em IEEE Trans. Info. Theory}, 52(12):5251--5266, 2006.

\bibitem{Cove06a}
T.~M. Cover and J.~A. Thomas.
\newblock {\em Elements of Information Theory}.
\newblock Wiley-Interscience, Hoboken, NJ, second edition, 2006.

\bibitem{Hopc01}
J.~E. Hopcroft, R.~Motwani, and J.~D. Ullman.
\newblock {\em Introduction to Automata Theory, Languages, and Computation}.
\newblock Addison-Wesley, second edition, 2001.

\end{thebibliography}

\end{document}